\newtheorem{theorem}{Theorem}[section]
\newtheorem{corollary}{Corollary}
\newtheorem{lemma}[theorem]{Lemma}
\newtheorem{proposition}{Proposition}
\newtheorem{definition}{Definition}
\newtheorem{example}{Example}
\title{On the convexity for the range set of two quadratic functions}
\author[1,2]{Huu-Quang Nguyen\footnote{quangdhv@gmail.com}}
\author[2]{Ya-Chi Chu\thanks{ycchu.academic@gmail.com}}
\author[2]{Ruey-Lin Sheu\thanks{rsheu@mail.ncku.edu.tw}}
\affil[1]{Institute of Natural Science Education, Vinh University, Vinh, Nghe An, Vietnam.}
\affil[2]{Department of Mathematics, National Cheng Kung University, Tainan, Taiwan. }
\date{}
\begin{document}

\maketitle

\begin{abstract}
Given $n\times n$ symmetric matrices $A$ and $B,$ Dines in 1941 proved that the joint range set $\{(x^TAx,x^TBx)|~x\in\mathbb{R}^n\}$ is always convex. Our paper is concerned with non-homogeneous extension of the Dines theorem for the range set $\mathbf{R}(f,g) = \{\left(f(x),g(x)\right)|~x \in \mathbb{R}^n \},$ $f(x) = x^T A x + 2a^T x + a_0$ and $g(x) = x^T B x + 2b^T x + b_0.$
We show that $\mathbf{R}(f,g)$ is convex if, and only if, any pair of level sets, $\{x\in\mathbb{R}^n|f(x)=\alpha\}$ and $\{x\in\mathbb{R}^n|g(x)=\beta\}$, do not separate each other. With the novel geometric concept about separation, we provide a polynomial-time procedure to practically check whether a given $\mathbf{R}(f,g)$ is convex or not.
\end{abstract}

\section{Introduction} \label{sec:intro}
Given a pair of quadratic functions, $f(x)=x^TAx+2a^Tx+a_0$ and $g(x)=x^TBx+2b^Tx+b_0$, where $A,~B$ are $n\times n$ real symmetric matrices, $a,b\in\mathbb{R}^n$ and $a_0,b_0\in\mathbb{R},$
their joint numerical range, a subset of $\mathbb{R}^2,$ is defined to be
$$
\mathbf{R}(f,g) = \left\{(f(x), g(x))~|~x\in \mathbb{R}^n \right\}\subset\mathbb{R}^2.
$$
In this paper, we are interested in the fundamental mathematical problem:
$$
\text{(P)}: \text{ When, and only when, the joint range } \mathbf{R}(f,g) \text{ is convex?}
$$

The first result regarding (P) was an original paper by Dines \cite[1941]{Dines41}. He showed that the joint range of two homogeneous quadratic forms $\left\{( x^T A x, x^T B x)~|~x \in \mathbb{R}^n \right\}$ is always convex. Though a special case of (P), Yakubovich \cite[1971]{Yakubovich71} used it to prove the classical $\mathcal{S}$-lemma, which later became
an indispensable tool in optimization and the control theory.
The $\mathcal{S}$-lemma asserts that, if $g(x)\le0$ satisfies Slater's
condition, namely, there is an $\overline{x}\in \mathbb{R}^n$ such
that $g(\overline{x}) < 0$, the following two statements are
equivalent:
\begin{itemize}
 \item[](${\rm S_1}$)~~ ($\forall x\in\Bbb R^n$) $~g(x)\le 0~\Longrightarrow~ f(x)\ge 0.$ 
 \item[](${\rm S_2}$)~~   $(\exists\mu\ge0)$~
$f(x) + \mu g(x)\ge0,~\forall x\in \mathbb{R}^n.$ 
\end{itemize}
A number of interesting applications follow from the $\mathcal{S}$-lemma.
In particular, it can be used to show a highly non-trivial result that quadratic program with one quadratic inequality constraint
$$ ({\rm QP1QC})~~~~~ \inf_{x\in \Bbb R^n}\left\{f(x) ~|~ g(x)\le0\right\}$$
always adopts strong duality, while $f$ and $g$ are not necessarily convex.
Survey and extensions of the $\mathcal{S}$-lemma are referred to, for example, Derinkuyu and Pinar \cite[2006]{Derinkuyu-Pinar06}, Pólik and Terlaky \cite[2007]{Polik-Terlaky07}, Tuy and Tuan \cite[2013]{Tuy-Tuan13}, Xia et al. \cite[2016]{Xia-Wang-Sheu16}.

The convexity of the joint numerical range $\mathbf{R}(f,g)$ also allows to reformulate and solve difficult optimization problems. Nguyen et al. \cite[2020]{Quang-Sheu-Xia-Po4} proposed to solve the following special type of quadratic optimization problem with a joint numerical range constraint:
$$
({\rm Po4}) \quad
\begin{array}{cl}
\displaystyle \inf _{(x,z) \in \mathbb{R}^{n} \times \mathbb{R}^2} & F(z) \\
\text { s.t. } & \alpha z_1 + \beta z_2 - \gamma \leq 0 \\
 & z = \left( z_1, z_2 \right) \in \mathbf{R}(f,g)
\end{array}$$
where $\alpha, \beta, \gamma \in \mathbb{R}^m$ and $F(z_1, z_2)$ is a convex quadratic function from $\mathbb{R}^2$ to $\mathbb{R}$. They showed that, if the joint range set $\mathbf{R}(f,g)$ is convex, (Po4) has an equivalent SDP reformulation. Then, some important optimization problems in the literature can be solved, including
%
%
%
\begin{itemize}
\item[-](Not solved efficiently before) Ye and Zhang \cite[2003]{Ye-Zhang16} proposed to minimize the absolute value of a quadratic function over a quadratic constraint:
$$
({\rm AQP}) \quad
\begin{array}{cl}
\displaystyle \inf _{x \in \mathbb{R}^{n}} & \left|x^{T} A x + 2 a^{T} x + a_{0}\right| \\
\text{ s.t. } & x^{T} B x + 2 b^{T} x + b_{0} \leq 0.
\end{array}
$$
It was suggested in \cite[2003]{Ye-Zhang16} to solve the problem $({\rm AQP})$, under strict conditions, by the bisection method, each iteration of which requires to do an SDP. In \cite[2020]{Quang-Sheu-Xia-Po4}, by the help of the convexity of $\mathbf{R}(f,g),$ the problem $({\rm AQP})$ can be resolved completely without any condition.
\item[-] (Not solved before) The quadratic hypersurface intersection problem proposed by Pólik and Terlaky \cite[2007]{Polik-Terlaky07}: given two quadratic surfaces $f(x) = 0$ and $g(x) = 0$, how to determine whether the two quadratic surfaces $f(x) = 0$ and $g(x) = 0$ has intersection without actually computing the intersection? The problem can be reformulated as the following non-linear least square problems:
$$
({\rm QSIC}) \quad
\begin{array}{cl}
\displaystyle \inf _{\left(x^{T}, z_{1}, z_{2}\right)^{T} \in \mathbb{R}^{n+2}} & \left(z_{1}\right)^{2}+\left(z_{2}\right)^{2} \\
\text { s.t. } &  \left\{\begin{array}{l}
f(x)-z_{1}=0 \\
g(x)-z_{2}=0,
\end{array}\right.
\end{array}
$$
which is a type of $
({\rm Po4}).$ In \cite[2020]{Quang-Sheu-Xia-Po4}, Nguyen et al. showed that, if $\mathbf{R}(f,g)$ is convex, $({\rm QSIC})$ can be solved by an SDP. If not, it can be solved directly by elementary analysis.
\item[-] (Not solved efficiently before) The double well potential problems (DWP) in \cite[2017]{DWP-1,DWP-2}:
$$
\begin{array}{rl}
({\rm DWP}) & \displaystyle \inf _{x \in \mathbb{R}^{n}} ~ \frac{1}{2} \left( \frac{1}{2} \|Px - p\|^2 - r \right)^2 + \frac{1}{2} x^T Q x - q^T x
\end{array},
$$
where $Q$ is an $n \times n$ symmetric matrix, $P \neq 0$ is an $m \times n$ matrix, $p \in \mathbb{R}^{m}, r \in \mathbb{R}$, and $q \in \mathbb{R}^n$. The original development for solving (DWP) used elementary (but lengthy) approach. By identifying (DWP) as a special type of (Po4), it can now be solved with just an SDP.
\end{itemize}

Though the characterization of the convexity of $\mathbf{R}(f,g)$ is a useful tool for solving optimization problems, in literature, progresses from Dines' result to the convexity of $\mathbf{R}(f,g)$ for general $f(x)=x^TAx+2a^Tx+a_0$ and $g(x)=x^TBx+2b^Tx+b_0$ have been very slow. The Dines theorem becomes invalid when either $f$ or $g$ or both adopt linear terms. Here is an example with configurations for easy understanding.
\begin{example} \label{ex:linear_term}
\renewcommand*{\arraystretch}{1}
Let $f(x,y) = -x^2+y^2$ and $g(x) = -2x^2+2y^2+4x-2y$ where $g(x)$ has linear terms. In this example, $$A =
\left[ \begin{array}{r r}
-1 & 0 \\
 0 & 1
\end{array} \right]~ \text{ and }~ B =
\left[ \begin{array}{r r}
-2 & 0 \\
 0 & 2
\end{array} \right].$$
Fig. \ref{fig:add_linear_homo} shows that $\left\{( x^T A x, x^T B x)~|~x \in \mathbb{R}^n \right\}$ is a straight line and hence convex. Fig. \ref{fig:add_linear_nonhomo} gives the graph of $\mathbf{R}(f,g)$, which is apparently non-convex.
\end{example}

\begin{figure}
\centering
\begin{subfigure}[c]{0.49\textwidth}
	\centering
	\includegraphics[width=\linewidth]{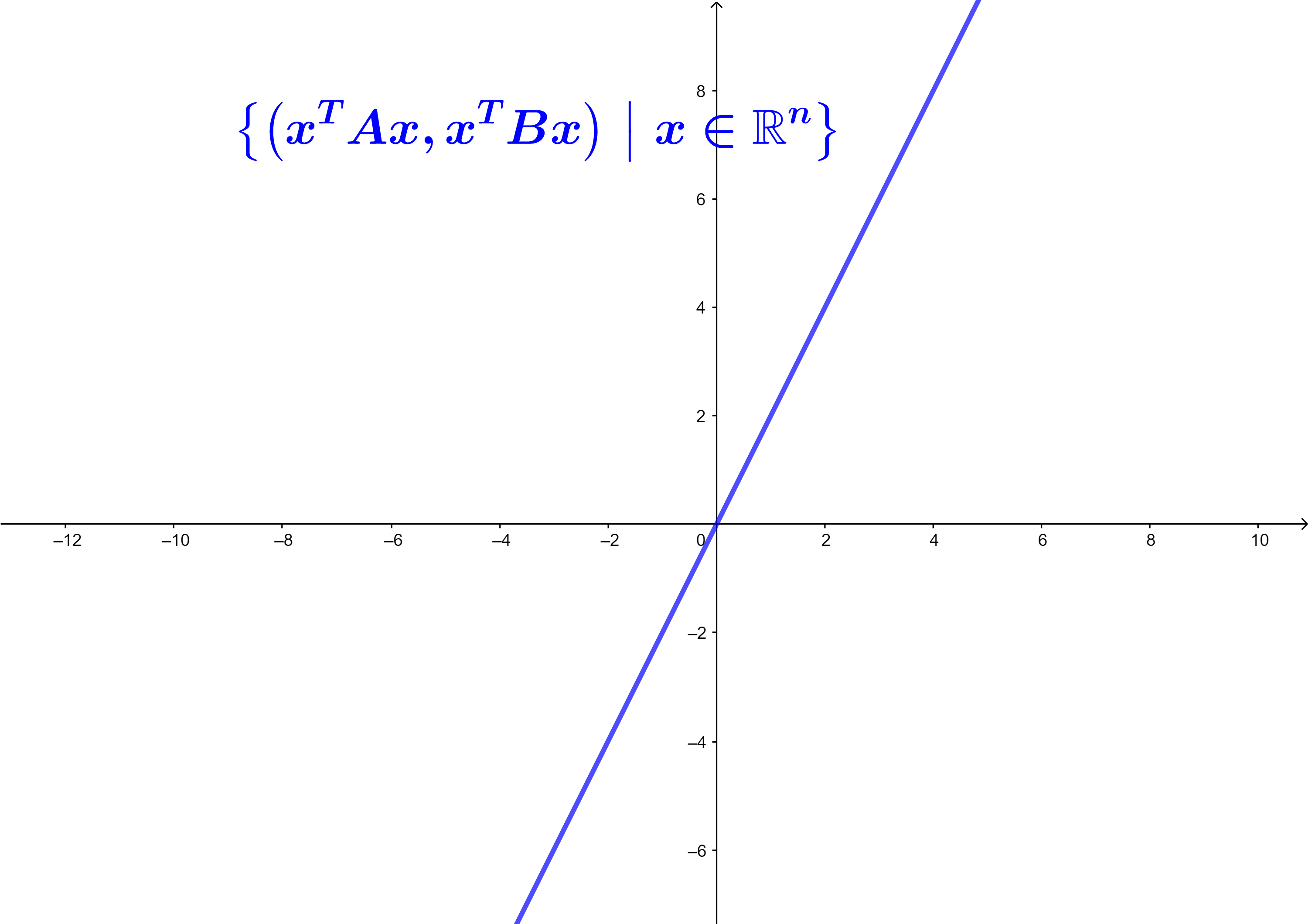}
	\caption{The set $\left\{( x^T A x, x^T B x)~|~x \in \mathbb{R}^n \right\}$ in Example \ref{ex:linear_term} is a straight line.}
	\label{fig:add_linear_homo}
\end{subfigure}
\hfill
\begin{subfigure}[c]{0.49\textwidth}
	\centering
	\includegraphics[width=\linewidth]{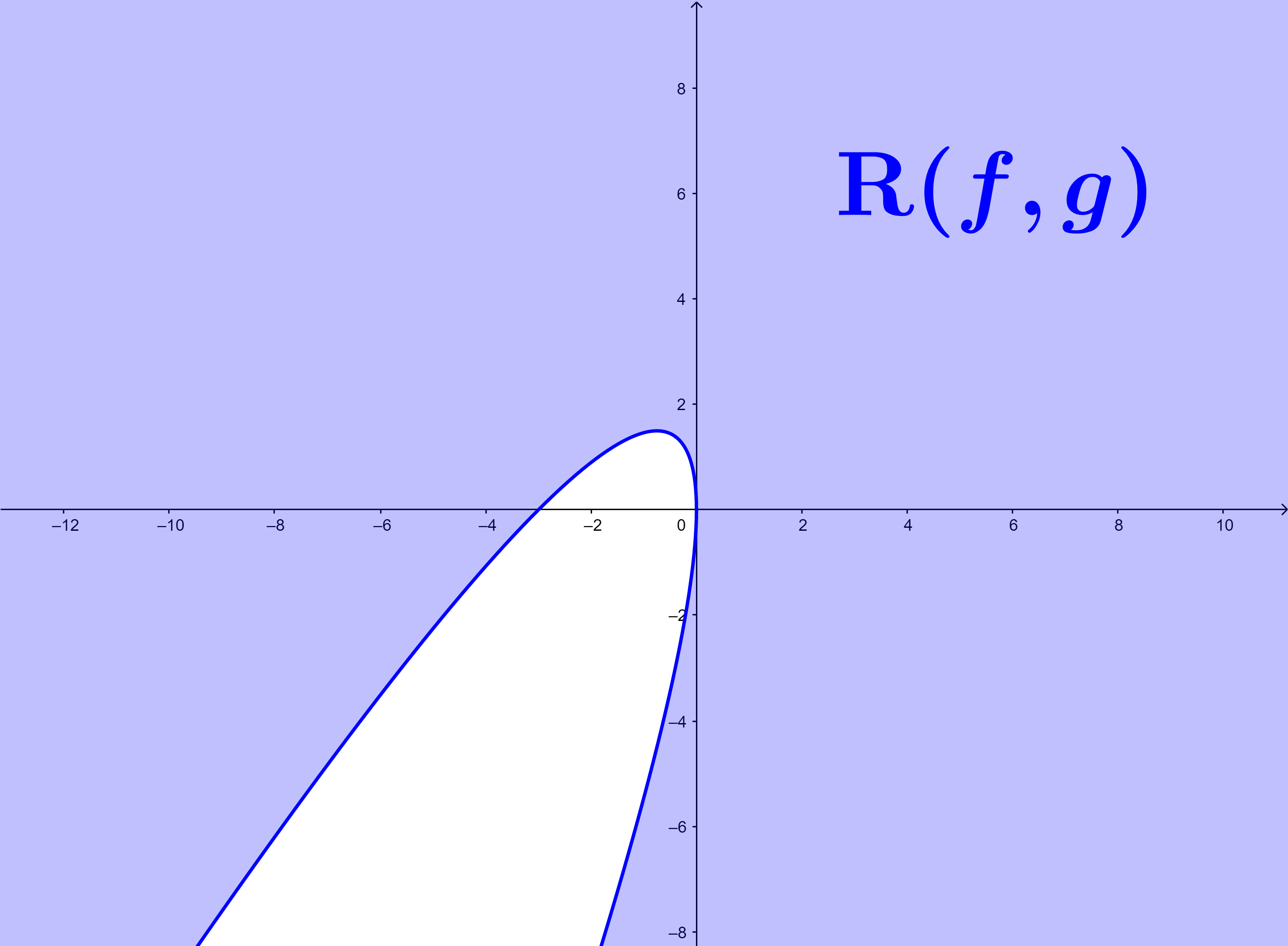}
	\caption{The shaded region is the joint range $\mathbf{R}(f,g)$ in Example \ref{ex:linear_term}.}
	\label{fig:add_linear_nonhomo}
\end{subfigure}
\caption{The graph corresponds to Example \ref{ex:linear_term}.}
\label{fig:add_linear_terms}
\end{figure}

For a long period of time, the best generalization of the Dines theorem has been Polyak's sufficient condition \cite[1998]{Polyak98} (The fourth row in Table \ref{tab:results_summary}). 
It was not until 2016 that Flores-Bazán and Opazo \cite[2016]{Bazan-Opazo16} (last row in Table \ref{tab:results_summary}) completely characterized the convexity of $\mathbf{R}(f,g)$ with a set of necessary and sufficient conditions. Over a period of 75 years from 1941 to 2016, notable results related to (P) include, in chronological order, Brickmen \cite[1961]{Brickmen61}, an unpublished manuscript by Ramana and Goldman \cite[1995]{Ramana-Goldman95}, and Polyak \cite[1998]{Polyak98}.  Among them, we feel that Brickmen's \cite[1961]{Brickmen61} and Flores-Bazán and Opazo's \cite[2016]{Bazan-Opazo16} results are the most fundamental. They are summarized in Table \ref{tab:results_summary}.

\begin{table}[h!]
\centering
\resizebox{\linewidth}{!}{
\begingroup
\renewcommand*{\arraystretch}{1.75}
\begin{tabular}{ c | m{0.8\linewidth}}
\specialrule{.1em}{.05em}{.05em}
\shortstack{ 1941 \\ (Dines \cite{Dines41})} &  \textbf{(Dines Theorem)}
\begin{center}
$\left\{ \left. \left( x^T A x, x^T B x \right) ~\right|~x \in \mathbb{R}^n \right\}$
\end{center}
is convex. Moreover, if $x^T A x$ and $x^T B x$ has no common zero except for $x=0$, then $\left\{ \left. \left( x^T A x, x^T B x \right) ~\right|~x \in \mathbb{R}^n \right\}$ is either $\mathbb{R}^2$ or an angular sector of angle less than $\pi$.  \\ \hline
\shortstack{ 1961 \\ (Brickmen \cite{Brickmen61})} &
\vspace{0.1cm}
\begin{center}
$\mathbf{K}_{A,B} = \left\{ \left. \left( x^T A x, x^T B x \right) ~\right|~x \in \mathbb{R}^n~,~\|x\|=1 \right\}$
\end{center}
is convex if $n \geq 3$.
\\ \hline
\multirow{2}{*}{\shortstack{ 1995 \\ (Ramana \& Goldman \cite{Ramana-Goldman95}) \\ \textbf{Unpublished}}} & \vspace{0.1cm} \begin{center}
$\mathbf{R}(f,g) = \left\{ \left. \left( f(x), g(x) \right) ~\right|~x \in \mathbb{R}^n \right\}$
\end{center}
is convex if and only if $\mathbf{R}(f,g) = \mathbf{R}(f_H,g_H) + \mathbf{R}(f,g)$, where $f_H(x) = x^T A x$ and $g_H(x) = x^T B x$. \\ \cline{2-2}
 & \multirow{2}{*}{\shortstack{$\mathbf{R}(f,g) = \left\{ \left. \left( f(x), g(x) \right) ~\right|~x \in \mathbb{R}^n \right\}$ \\
is convex if $n \geq 2$ and $\exists~ \alpha, \beta \in \mathbb{R}$ such that \mbox{$\alpha A + \beta B \succ 0$}.}} \\ \cline{1-1}
 &  \\ \cline{2-2}
\shortstack{ 1998 \\ (Polyak \cite{Polyak98})} &
\vspace{0.1cm}
\begin{center}
$\left\{ \left. \left( x^T A x, x^T B x, x^T C x \right) ~\right|~x \in \mathbb{R}^n \right\}$
\end{center}
is convex if $n \geq 3$ and $\exists~ \alpha, \beta, \gamma \in \mathbb{R}$ such that \mbox{$\alpha A + \beta B + \gamma C \succ 0$}.   \\ \cline{2-2}
 & \vspace{0.1cm} \begin{center}
$\left\{ \left. \left( x^T A_1 x, \cdots, x^T A_m x \right) ~\right|~x \in \mathbb{R}^n \right\}$
\end{center}
is convex if $A_1, \cdots, A_m$ commute. \\ \hline
\shortstack{ 2016 \\ (Bazán \& Opazo \cite{Bazan-Opazo16})} &
\vspace{0.1cm}
\begin{center}
$\mathbf{R}(f,g) = \left\{ \left. \left( f(x), g(x) \right) ~\right|~x \in \mathbb{R}^n \right\}$
\end{center}
is convex if and only if $\exists~ d=(d_1,d_2) \in \mathbb{R}^2$, $d \neq 0$, such that the following four conditions hold:
\begin{description}
\item[(C1)] $F_L \left( \mathcal{N}(A) \cap \mathcal{N}(B) \right) = \{0\}$
\item[(C2)] $d_2 A = d_1 B$
\item[(C3)] $-d \in \mathbf{R}(f_H,g_H)$
\item[(C4)] $F_H(u) = -d \implies \left\langle F_L(u), d_{\perp}\right\rangle \neq 0$
\end{description}
where $\mathcal{N}(A)$ and $\mathcal{N}(B)$ denote the null space of $A$ and $B$ respectively, $F_H(x) = \left( f_H(x), g_H(x) \right) = \left( x^T A x , x^T B x \right)$, $F_L(x)=\left( a^T x , b^T x \right)$, and $d_{\perp} = (-d_2, d_1)$. \\ \specialrule{.1em}{.05em}{.05em}
\end{tabular}
\endgroup
}
\caption{Chronological list of notable results related to problem (P)}
\label{tab:results_summary}
\end{table}

Flores-Bazán and Opazo's result \cite[2016]{Bazan-Opazo16} (last row in Table \ref{tab:results_summary}) is considered fundamental by us because they were the first to provide a complete answer to problem (P). Their results rely heavily on an unpublished
manuscript by Ramana and Goldman \cite[1995]{Ramana-Goldman95} (third row in Table \ref{tab:results_summary}). In \cite[1995]{Ramana-Goldman95}, it was shown that
$\mathbf{R}(f,g)$ is convex if and only if the following relation holds:
\begin{equation} \label{eq:Homo+NonHomo-relation}
\mathbf{R}(f,g) = \mathbf{R}(f_H,g_H) + \mathbf{R}(f,g),
\end{equation}
where $f_H(x) = x^T A x,~g_H(x) = x^T B x.$
For example, let $f(x,y,z) = x^2+y^2$ and $g(x,y,z) = -x^2+y^2+z.$
Figure \ref{fig:shifted_nonhomo} shows that $\mathbf{R}(f,g)$ is the right half-plane, while Figure \ref{fig:shifted_homo} is the joint range of the homogeneous part $\mathbf{R}(f_H,g_H)$ which is an angular sector of angle $\pi/2$. In this example, it can be checked that
\begin{equation*} 
\mathbf{R}(f_H,g_H) + \mathbf{R}(f,g) = \bigcup_{y \in \mathbf{R}(f,g)} \mathbf{R}(f_H,g_H)+y \subset \mathbf{R}(f,g).
\end{equation*}
Moreover, there is $\mathbf{R}(f,g)\subset \mathbf{R}(f_H,g_H) + \mathbf{R}(f,g),$ which verifies \eqref{eq:Homo+NonHomo-relation} for this example.

\begin{figure}
\centering
\begin{subfigure}[c]{0.49\textwidth}
	\centering
	\includegraphics[width=\linewidth]{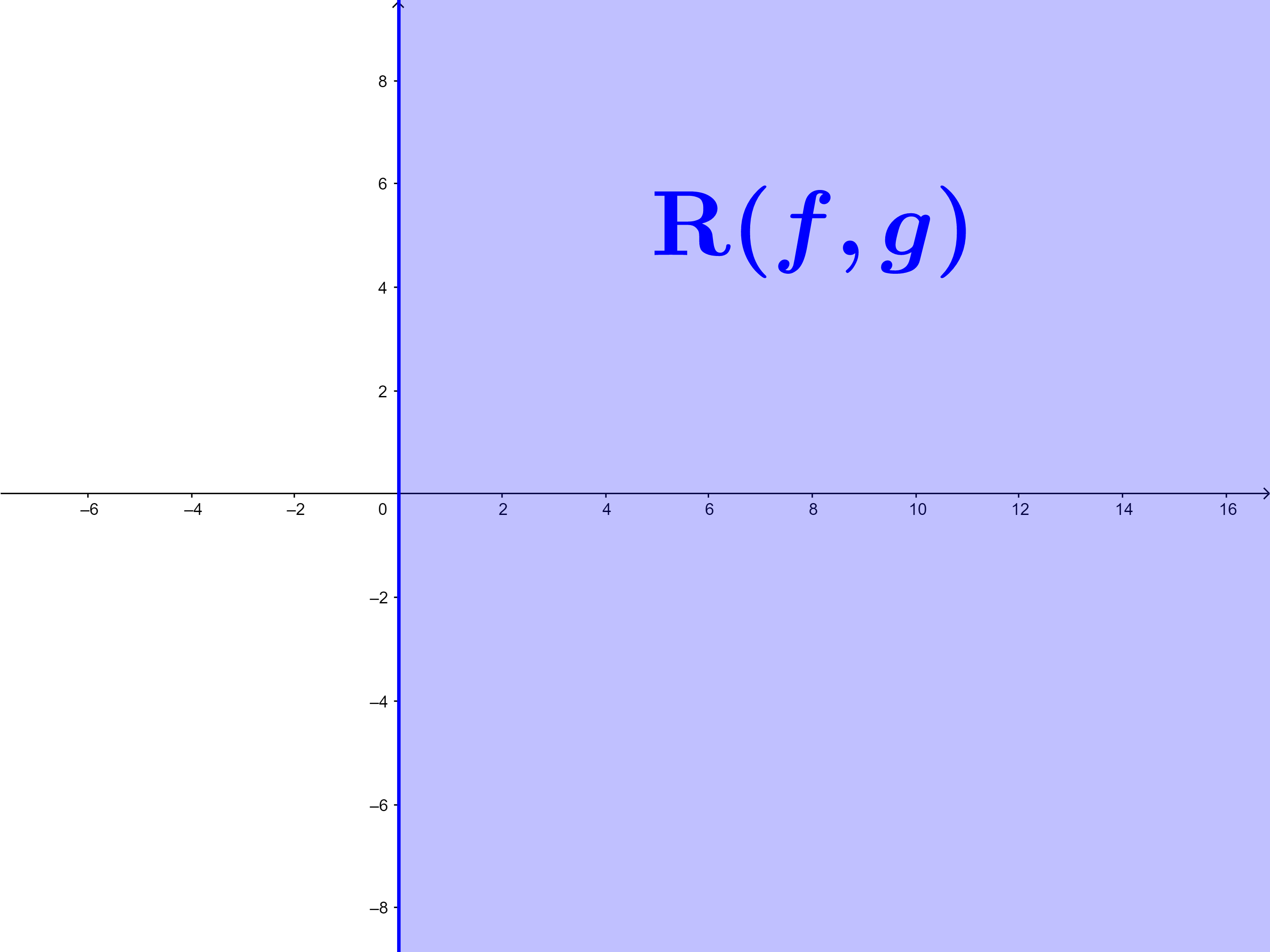}
	\caption{The joint range $\mathbf{R}(f,g)$ is the right half-plane.}
	\label{fig:shifted_nonhomo}
\end{subfigure}
\hfill
\begin{subfigure}[c]{0.49\textwidth}
	\centering
	\includegraphics[width=\linewidth]{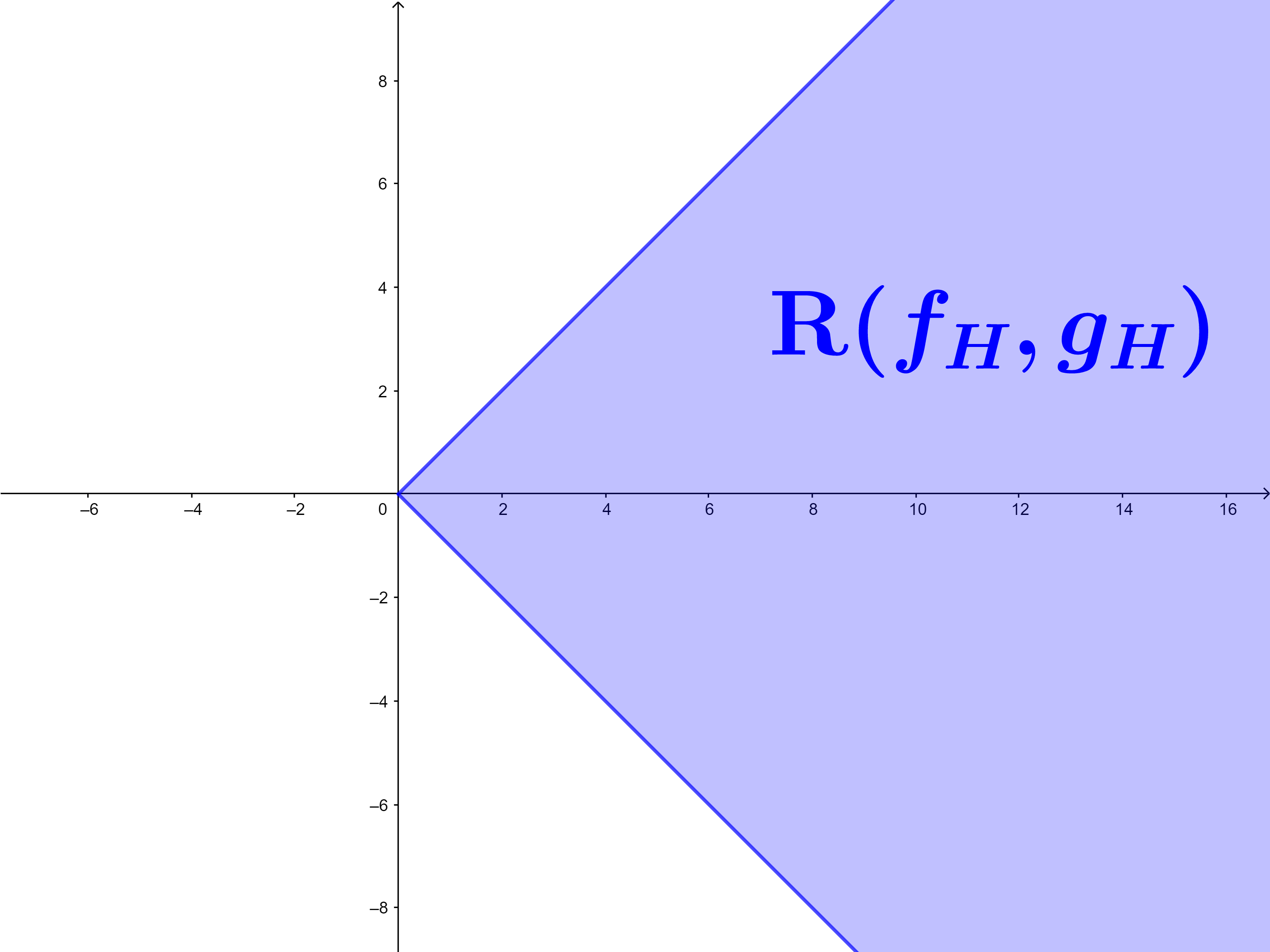}
	\caption{The joint range of the homogeneous part $\mathbf{R}(f_H,g_H)$ is an angular sector of angle $\pi/2$.}
	\label{fig:shifted_homo}
\end{subfigure}
\caption{Let $f(x,y,z) = x^2+y^2$ and $g(x,y,z) = -x^2+y^2+z$. }
\label{fig:shifted}
\end{figure}

Characterizing necessary and sufficient conditions for (P) from (\ref{eq:Homo+NonHomo-relation}) is technical and tedious.
Flores-Bazán and Opazo \cite[2016]{Bazan-Opazo16} wrote a 34-pages long paper to complete it. Moreover, they do not describe how more difficult the convexity can be tackled efficiently, i.e., how does one determine whether there exists an appropriate $d=(d_1,d_2) \in \mathbb{R}^2$, $d \neq 0$ such that the conditions {\bf (C1)-(C4)} (last row in Table \ref{tab:results_summary}) are satisfied?

The main purpose of this paper is to provide a different view and thus a different set of necessary and sufficient conditions to describe problem (P) with fully geometric insights. We indeed borrow the tools from Nguyen and Sheu \cite[2019]{Quang-Sheu19} in which a new concept called ``separation of quadratic level sets'' was introduced. The idea was first used to give a neat proof for the $\mathcal{S}$-lemma with equality by Xia et al. \cite[2016]{Xia-Wang-Sheu16}. We show in this paper that the same idea can be extended to accommodate our purpose to check the convexity of $\mathbf{R}(f,g)$ by a constructive polynomial-time procedure.

The paper is developed in the following sequence.
\begin{itemize}
  \item In Section \ref{sec:def_sep}, we shall review the concept of separation of two sets and list several properties for separation of two quadratic level sets.
  \item In Section \ref{sec:sep_JNR}, we obtain the main result ``the joint numerical range $\mathbf{R}(f,g)$ is non-convex if and only if there exists $\alpha, \beta \in \mathbb{R}$ such that $\{x\in\mathbb{R}^n |~g(x)=\beta\}$ separates $\{x\in\mathbb{R}^n |~f(x)=\alpha\}$ or $\{x\in\mathbb{R}^n |~f(x)=\alpha\}$ separates $\{x\in\mathbb{R}^n |~g(x)=\beta\}$.
  \item In Section \ref{sec:algorithm}, a polynomial-time procedure for checking the convexity of $\mathbf{R}(f,g)$ is provided.
  \item In Section \ref{sec:equiv}, we prove that the result of Flores-Bazán and Opazo \cite[2016]{Bazan-Opazo16} is an implication of our result in Section \ref{sec:sep_JNR}.
  \item Finally, we conclude our paper by briefly mentioning the relation between the convexity of joint range $\mathbf{R}(f,g)$ and variants of $\mathcal{S}$-lemma.
\end{itemize}

Throughout the paper, we adopt the following notations. For a matrix $P$, the symbol $P^{\dagger}$ denotes the Moore-Penrose generalized inverse of $P$. The null space and range space of $P$ is denoted by $\mathcal{N}(P)$ and $\mathcal{R}(P)$ respectively. For a subspace $W$  of  $\mathbb{R}^n$, $W^{\perp}$ denotes the orthogonal complement of $W$ with respect to the standard inner product equipped on $\mathbb{R}^n$. For conciseness, given a real constant $\alpha$, the set $\{f=\alpha\}\triangleq \{x\in\mathbb{R}^n |~f(x)=\alpha\}$ is called the $\alpha$-level set of $f$, and $\{f<\alpha\}\triangleq\{x\in\mathbb{R}^n |~f(x)<\alpha\}$ is said to be the $\alpha$-sublevel set of $f$.
\section{Separation of Quadratic Level Sets} \label{sec:def_sep}
Given a pair of quadratic functions $f$ and $g,$ Nguyen and Sheu in \cite[2019]{Quang-Sheu19} introduced the following definition for ``separation of level sets'':

\begin{definition}[\cite{Quang-Sheu19}] \label{def-separation}
The $0$-level set $\{g=0\}$ is said to separate the set $\{f\star 0\}$, where $\star \in \{<,=\}$, if there are non-empty subsets $L^-$ and $L^+$ of $\{f \star 0\}$ such that
\begin{equation} \label{eq:def_sep}
\begin{aligned}
&L^-\cup L^+ = \{f \star 0\} ~ \text{ and } \\
&g(u^-)g(u^+)<0,~\forall~ u^-\in L^-;~\forall~u^+\in L^+.
\end{aligned}
\end{equation}
\end{definition}

Several remarks directly from the definition should be noted.

\begin{enumerate}[(a)]
  \item  When $\{g = 0\}$ separates $\{f \star 0\}$, $\{f\star 0\}$ must be disconnected. Otherwise, by the Intermediate Value Theorem, $g(\{f\star 0\})$ is a connected interval containing the point $0,$ which contradicts to Definition \ref{def-separation}. \label{defRemark-affine}
  \item An affine set of the type $\{f \star 0\}$ with $\star \in \{<,=\}$ cannot be separated by any other $\{g=0\}.$ This is due to $\{f \star 0\}$ for $\star \in \{<,=\}$ being always connected, provided that $f$ is affine.
  \item $\{g = 0\}$ separates $\{f <0\}$ $\centernot\Longrightarrow$ $\{g = 0\}$ separates $\{f = 0\}.$ Figure \ref{fig:sep-intro-sublevel-not-level} shows such an example. \label{defRemark-sublevel-not-level}
  \item $\{g = 0\}$ separates $\{f =0\}$ $\centernot\Longrightarrow$ $\{g = 0\}$ separates $\{f < 0\}.$ See Figure \ref{fig:sep-intro-level-not-sublevel}. \label{defRemark-level-not-sublevel}
  \item $\{g = 0\}$ separates $\{f = 0\}$ $\centernot\Longrightarrow$ $\{f = 0\}$ separates $\{g = 0\}.$ A simple example is that $g$ is affine.
      A hyperplane $\{g = 0\}$ can easily separate the other supersurface $\{f = 0\},$ but it may not be separated reversely. See Figure \ref{fig:sep-intro-sublevel-not-level}.\label{defRemark-symmetry}
  \item $\{g = 0\}$ separates $\{f =0\}$ $\centernot\Longrightarrow$ $\{g = \alpha\}$ separates $\{f = \beta\}$ for $\alpha,~\beta\in\mathbb{R}.$ That is, for separation property, the ``hight'' associated with the contour matters. See Figure \ref{fig:sep-intro-diff-level}. \label{defRemark-height}
\end{enumerate}

%

\begin{figure}
\centering
\begin{minipage}[t]{0.485\linewidth}
\centering
\captionsetup{width=\linewidth}
\includegraphics[width=\linewidth]{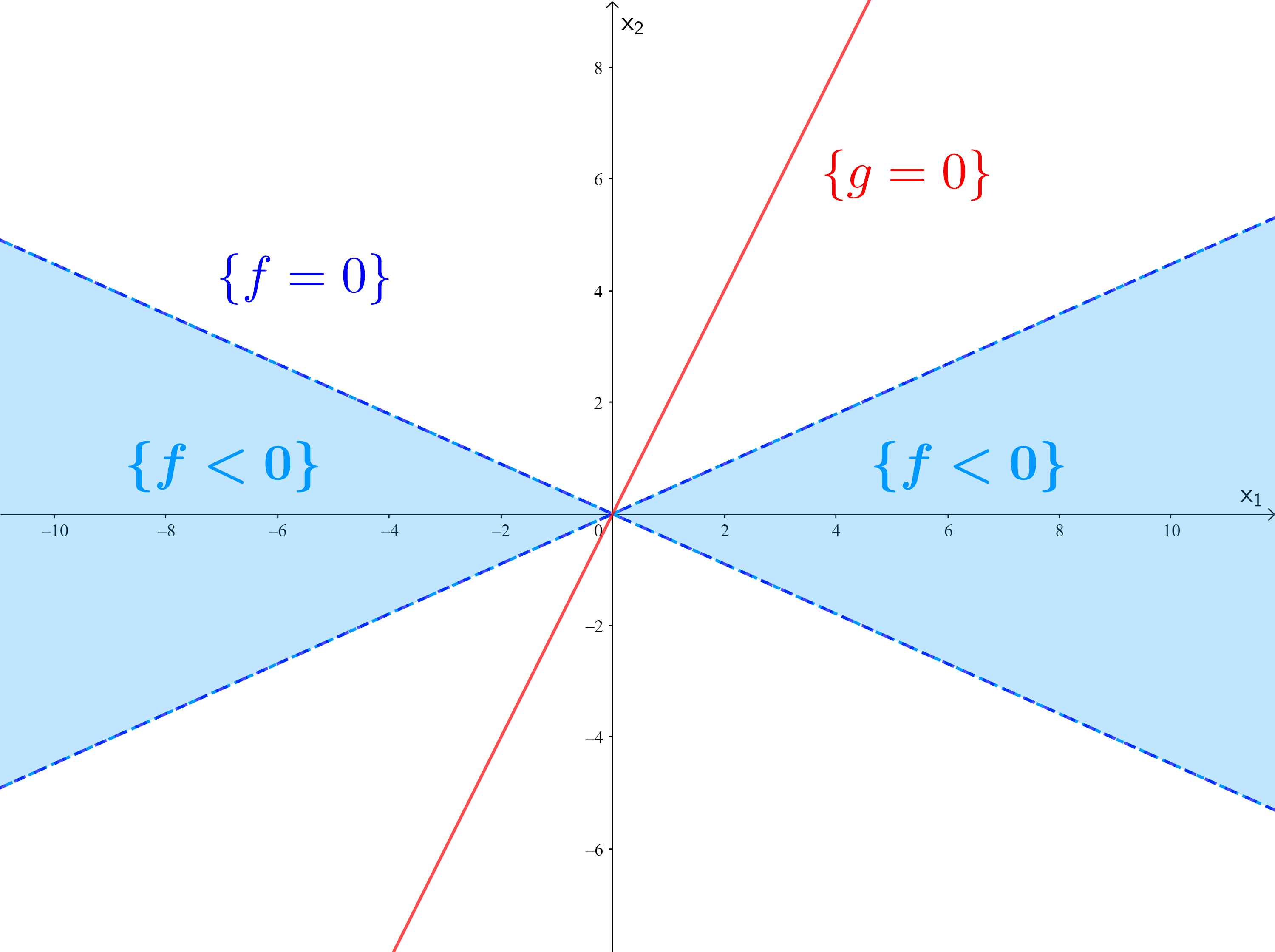}
\caption{\small For remark (\ref{defRemark-sublevel-not-level}) and remark (\ref{defRemark-symmetry}). Let $f(x,y) = -x^2 + 4 y^2$ and $g(x,y) = 2x-y$. The level set $\{g=0\}$ separates $\{f<0\},$ while $\{g=0\}$ does not separate $\{f=0\}.$}
\label{fig:sep-intro-sublevel-not-level}
\end{minipage}
\hfill
\begin{minipage}[t]{0.485\linewidth}
\centering
\captionsetup{width=\linewidth}
\includegraphics[width=\linewidth]{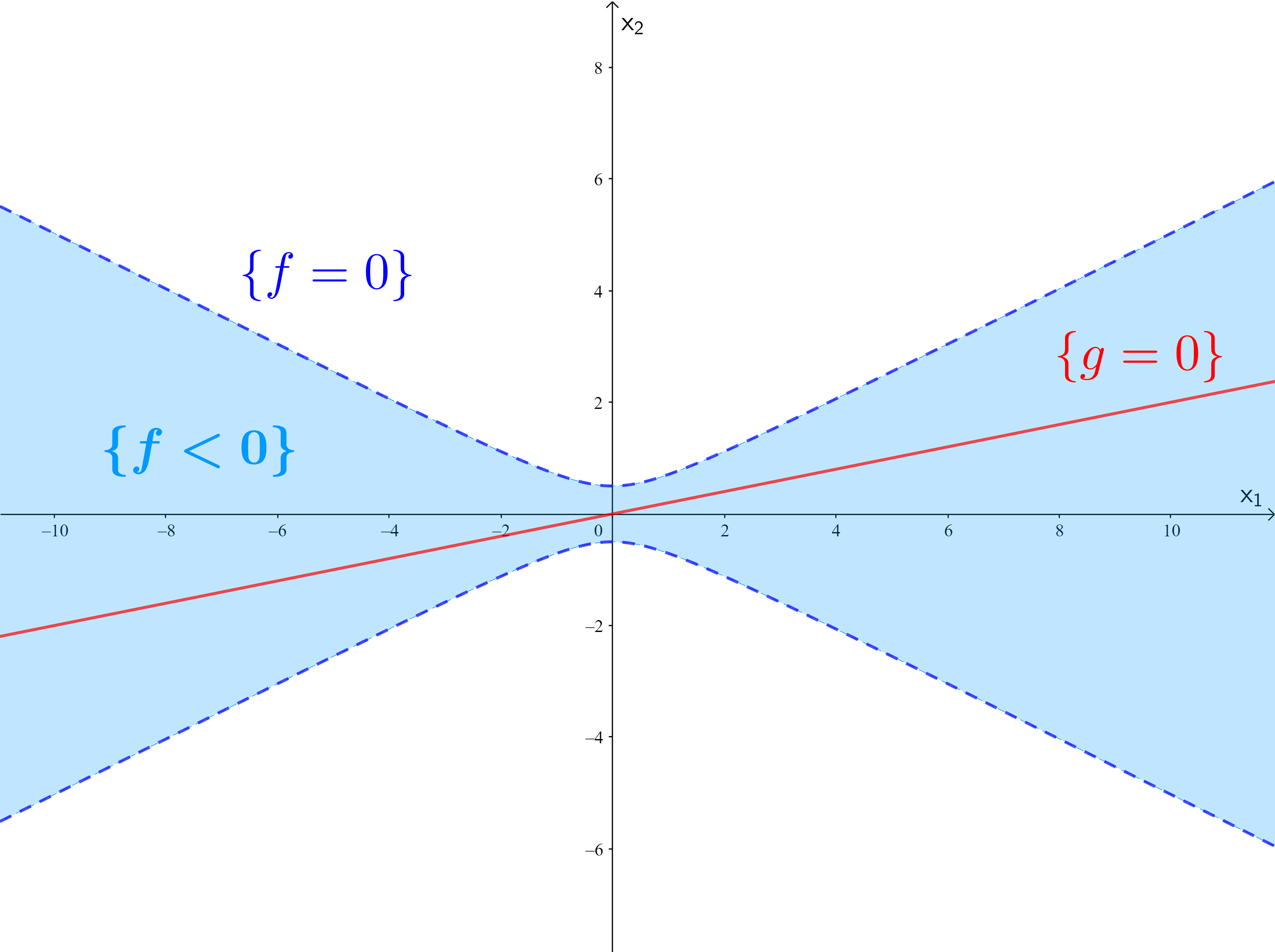}
\caption{\small For remark (\ref{defRemark-level-not-sublevel}). Let $f(x,y) = -x^2 + 4 y^2 - 1$ and $g(x,y) = x-5y$. The level set $\{g=0\}$ separates $\{f=0\}$ while $\{g=0\}$ does not separate $\{f<0\}.$}
\label{fig:sep-intro-level-not-sublevel}
\end{minipage}
\end{figure}
\begin{figure}
  \begin{subfigure}[t]{0.485\linewidth}
	\centering
	\captionsetup{width=\linewidth}
	\includegraphics[width=\linewidth]{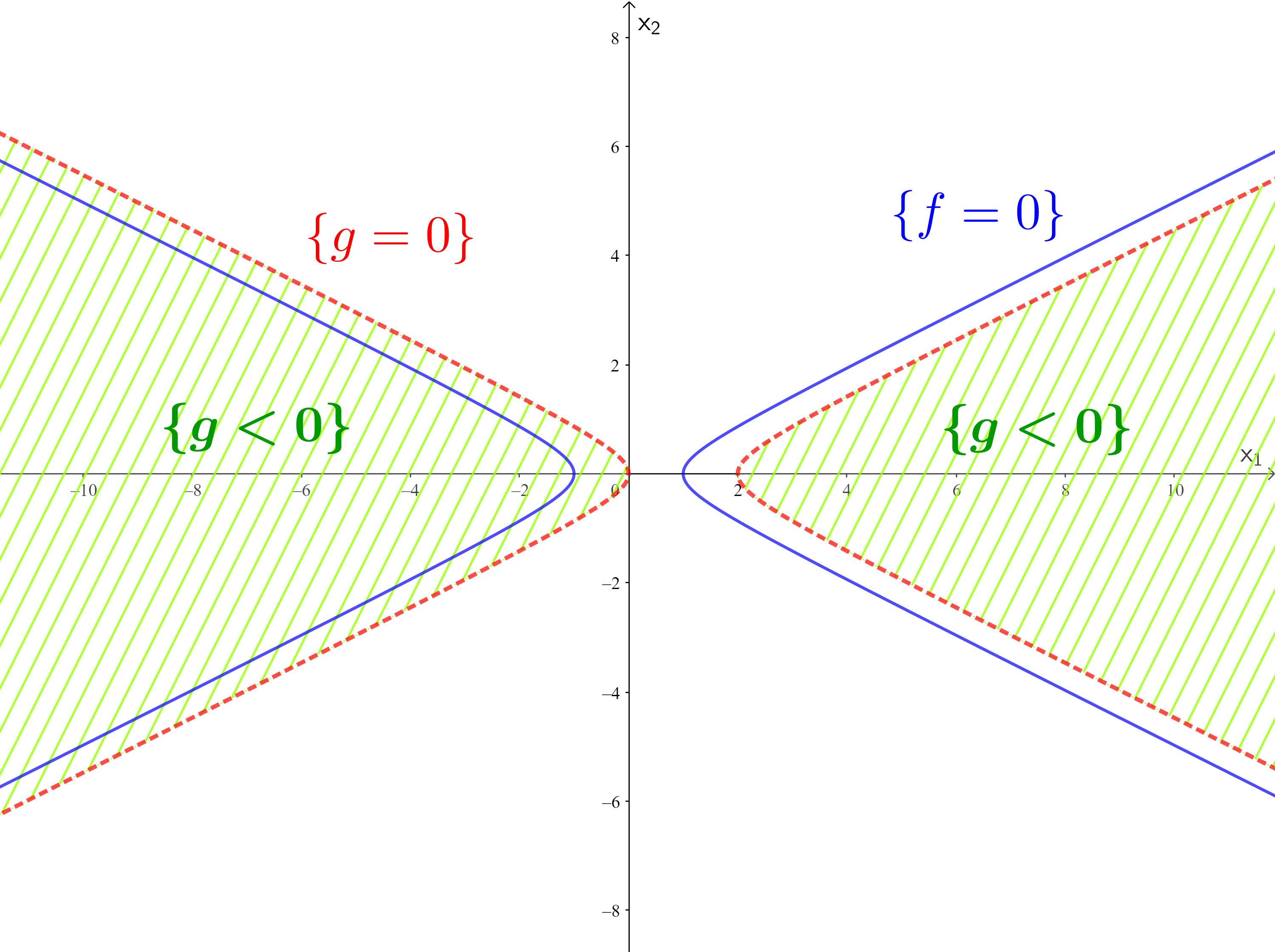}
	\caption{The level set $\{g=0\}$ separates $\{f=0\}$ since one branch of $\{f=0\}$ lies in $\{g<0\}$ while the other lies in $\{g>0\}$.}
	\label{fig:sep-intro-diff-level-1}
\end{subfigure}
\hfill
\begin{subfigure}[t]{0.485\linewidth}
	\centering
	\captionsetup{width=\linewidth}
	\includegraphics[width=\linewidth]{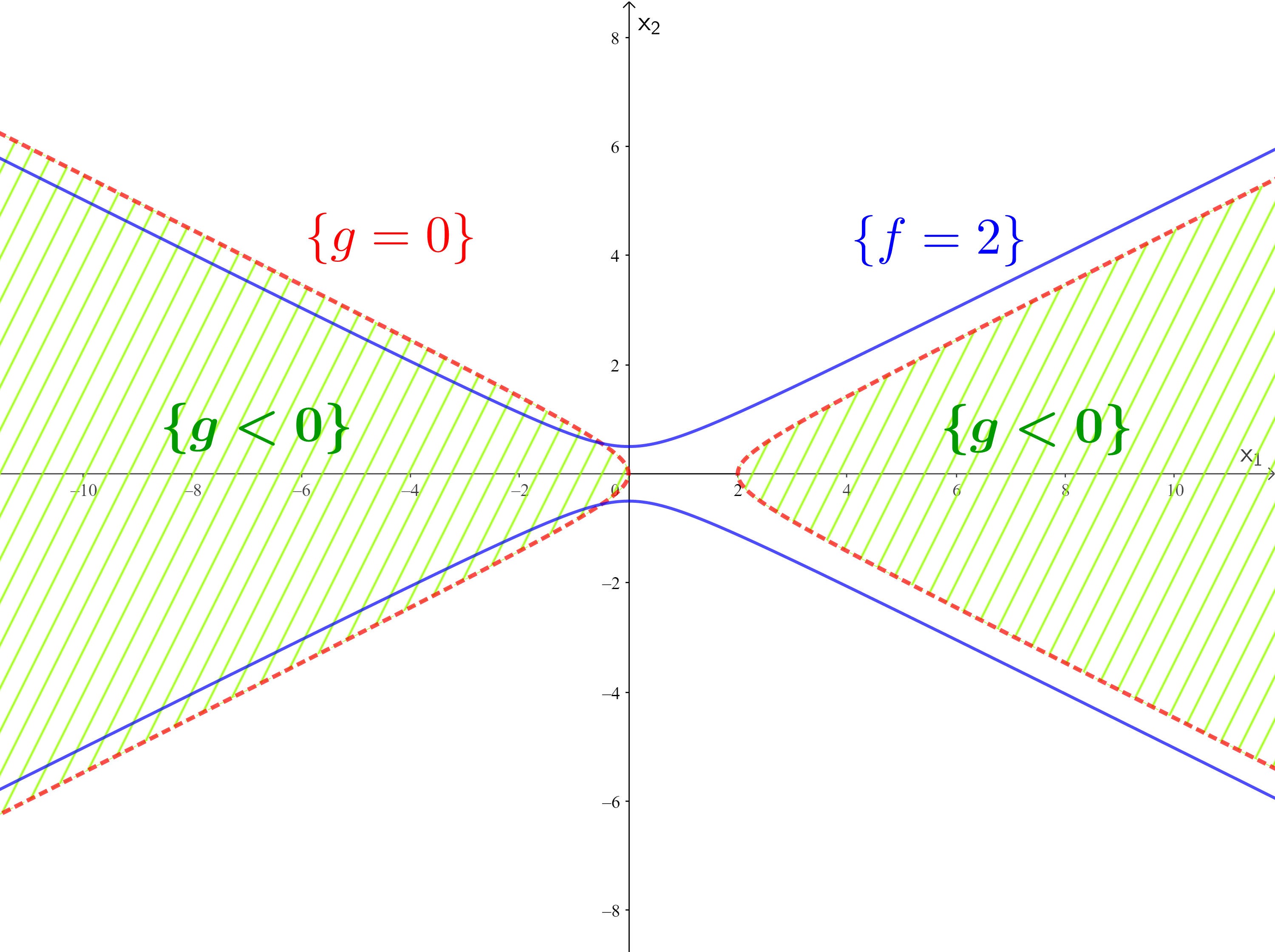}
	\caption{Since $\{g=0\} \cap \{f=2\}\not=\emptyset,$ $\{g=0\}$ does not separate $\{f=2\}$ and $\{f=2\}$ does not separate $\{g=0\}$ either.}
	\label{fig:sep-intro-diff-level-2}
\end{subfigure}
\caption{For remark (\ref{defRemark-height}) in which $f(x,y) = -x^2 + 4 y^2 + 1$ and $g(x,y) = -(x-1)^2+4y^2+1$.}
\label{fig:sep-intro-diff-level}
\end{figure}

In the above remarks, four types of separation were considered.
\begin{itemize}
  \item[-] $\{g = 0\}$ separates $\{f <0\}$;
  \item[-] $\{g = 0\}$ separates $\{f =0\}$;
  \item[-] $\{g = 0\}$ separates $\{f =0\}$, and simultaneously $\{f =0\}$ separates $\{g =0\}.$
  \item[-] $\{g = \alpha\}$ separates $\{f =\beta\}$ for some $\alpha,\beta\in\mathbb{R}.$
\end{itemize}
For $\{g = 0\}$ to separate $\{f <0\},$ it has been completely characterized by
Nguyen and Sheu \cite[2019]{Quang-Sheu19}. They showed that $\{g = 0\}$ can separate $\{f <0\}$ if and only if $g$ is affine, the matrix $A$ (of $f$) has exactly one negative eigenvalue, and $f \left|_{\{g=0\}}(x) \right. \geq 0$, where $f \left|_{\{g=0\}}(x) \right.$ is the restriction of the function $f$ on the set $\{g=0\}$.

The case for $\{g = 0\}$ to separate $\{f=0\}$ is quite different. While $\{g = 0\}$ to separate $\{f <0\}$ can happen only when $g$ is affine, Figure \ref{fig:sep-intro-diff-level-1} demonstrates the possibility for a quadratic level set $\{g = 0\}$ to separate another quadratic level set $\{f=0\}.$ Interestingly, in an unpublished manuscript by Nguyen and Sheu \cite[2020]{separation}, it was shown that, for $\{g = 0\}$ to separate  $\{f=0\},$ there must exist a linear combination $-\lambda f+g$ such that $-\lambda f+g$ is affine and $\{-\lambda f+g=0\}$ separates $\{f=0\}.$

\begin{lemma}[Nguyen and Sheu {\cite[2020]{separation}}] \label{lem:G=kF+H}
The 0-level set $\{g=0\}$ separates $\{f=0\}$ if and only if there exists some $\lambda \in \mathbb{R}$ such that $B = \lambda A$ and $\{-\lambda f + g = 0\}$ separates $\{f=0\}$.
\end{lemma}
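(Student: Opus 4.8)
The plan is to first observe that the entire statement reduces to a single algebraic assertion about the Hessians, namely that separation forces $B=\lambda A$. The key point is that on the level set $\{f=0\}$ the functions $g$ and $-\lambda f+g$ coincide for \emph{every} $\lambda$, since $f$ vanishes there. Because Definition~\ref{def-separation} only inspects the sign pattern of the separating function along $\{f=0\}$, the level set $\{-\lambda f+g=0\}$ separates $\{f=0\}$ if and only if $\{g=0\}$ does, and this equivalence holds for any $\lambda\in\mathbb{R}$ whatsoever. This immediately disposes of the ``if'' direction. It also shows that the only substantive content of the ``only if'' direction is to exhibit a \emph{particular} $\lambda$ for which $-\lambda f+g$ is affine, i.e. for which $B=\lambda A$; the separation of $\{f=0\}$ by $\{-\lambda f+g=0\}$ is then inherited for free from the separation by $\{g=0\}$.

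So assume $\{g=0\}$ separates $\{f=0\}$ and let me produce $\lambda$ with $B=\lambda A$. By remark~(\ref{defRemark-affine}) the set $\{f=0\}$ is disconnected, and I would invoke the affine classification of real quadrics: the only nonempty disconnected quadrics are two parallel hyperplanes and (possibly cylindrical) two-sheeted hyperboloids. Since proportionality $B=\lambda A$ is invariant under a congruence $x\mapsto Ty+c$ and separation is preserved by affine homeomorphisms, I may pass to coordinates in which (after replacing $f$ by $-f$ if necessary) $f(y)=y_1^2-q(y')-\rho$ with $y=(y_1,y')$, $q(y')=y'^{T}Qy'$, $Q\succeq0$, and $\rho>0$; here $A=\mathrm{diag}(1,-Q)$. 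The two sheets are $\{y_1\ge\sqrt{\rho}\}$ and $\{y_1\le-\sqrt{\rho}\}$, each connected, and because $g$ never vanishes on $\{f=0\}$ it has a constant sign on each; as $g$ takes both signs, after possibly replacing $g$ by $-g$ I may assume $g>0$ on the upper sheet and $g<0$ on the lower one.

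The engine of the argument is the reflection $\sigma:(y_1,y')\mapsto(-y_1,y')$, which fixes $\{f=0\}$ and swaps the two sheets. Writing $g(y)=B_{11}y_1^2+2y_1\,\beta^{T}y'+y'^{T}B_{**}y'+2b_1y_1+2b'^{T}y'+b_0$, the odd-in-$y_1$ part is $g(y)-g(\sigma y)=4y_1(\beta^{T}y'+b_1)$. Evaluating on the upper sheet, where $y_1>0$ and $g(y)>0>g(\sigma y)$, gives $\beta^{T}y'+b_1>0$ for every $y'$, because each $y'\in\mathbb{R}^{n-1}$ is realized on that sheet; hence $\beta=0$ and $b_1>0$. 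For the remaining block I would substitute $y_1^2=q(y')+\rho$ into $g$ and set $P(y')=B_{11}(q(y')+\rho)+y'^{T}B_{**}y'+2b'^{T}y'+b_0$, so that the two sheets read $P(y')\pm2b_1\sqrt{q(y')+\rho}$ with opposite signs; this forces $|P(y')|<2b_1\sqrt{q(y')+\rho}$ for all $y'$. The right-hand side grows at most linearly in $\|y'\|$ while the quadratic part of $P$ is the form $B_{11}Q+B_{**}$, so the bound can hold only if $B_{11}Q+B_{**}=0$, i.e. $B_{**}=-B_{11}Q$. Together with $\beta=0$ this yields $B=\mathrm{diag}(B_{11},-B_{11}Q)=B_{11}\,\mathrm{diag}(1,-Q)=B_{11}A$, so $\lambda=B_{11}$ is the desired scalar.

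I expect the main obstacle to be the second algebraic step rather than the first: controlling the growth of $P$ against $\sqrt{q(y')+\rho}$ requires separating the directions where $Q$ is positive from $\ker Q$ (along $\ker Q$ the right-hand side is merely bounded, which still suffices since there $q\equiv0$ and $P$ must then be bounded, killing the corresponding block). I would also treat carefully the degenerate normal forms: $Q=0$ (parallel hyperplanes), where the bound is uniform and forces $B_{**}=0=-B_{11}Q$ directly, and the borderline $B_{11}=0$, where the conclusion degenerates to the harmless affine case $B=0$. The reduction to normal form and the verification that both sheets are nonempty and genuinely separated by the sign of $g$ are routine, but stating the classification of disconnected quadrics cleanly is where care is needed.
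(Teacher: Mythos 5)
The paper never proves this lemma: it is imported verbatim from the unpublished manuscript \cite{separation}, so there is no in-paper proof to compare against, and your argument has to be judged as a self-contained reconstruction. Judged that way, it is correct. Your opening observation is the right structural move and is airtight: by Definition \ref{def-separation} (or Proposition \ref{prop:criterion}), whether $\{h=0\}$ separates $\{f=0\}$ depends only on the restriction $h|_{\{f=0\}}$, and $(-\lambda f+g)|_{\{f=0\}}=g|_{\{f=0\}}$ for every $\lambda$; this settles the ``if'' direction and reduces the whole lemma to the single claim that separation forces $B=\lambda A$, which is exactly the content the paper's surrounding discussion attributes to \cite{separation}. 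Your proof of that claim also goes through: disconnectedness (remark (\ref{defRemark-affine})) plus the affine classification of quadrics puts $\pm f$ in the form $y_1^2-q(y')-\rho$ with $Q\succeq 0$, $\rho>0$ (the parabolic normal form $a\notin\mathcal{R}(A)$ gives a graph, hence connected, and is excluded); proportionality of Hessians and separation are both invariant under the affine change, so the reduction is legitimate. The reflection step correctly yields $\beta=0$, $b_1>0$, and the bound $|P(y')|<2b_1\sqrt{q(y')+\rho}$ forces the quadratic form $B_{11}Q+B_{**}$ to vanish. One simplification: the case distinction you worry about between $\ker Q$ and the positive directions of $Q$ is unnecessary — along any ray $y'=tv$ one has $\sqrt{q(tv)+\rho}\le |t|\sqrt{q(v)}+\sqrt{\rho}=O(|t|)$ uniformly in the direction, so $v^T\left(B_{11}Q+B_{**}\right)v=0$ for every $v$ by comparing growth rates, with no separate treatment of $Q=0$ or $\ker Q$ needed (along $\ker Q$ the bound even gives the extra, unneeded conclusion $b'^Tv=0$). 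The only remaining burden is the one you flagged yourself: a clean statement and proof (or citation) of the fact that the nonempty disconnected quadric hypersurfaces are exactly the two-parallel-hyperplane and two-sheeted (possibly cylindrical) hyperbolic types; this is a standard completing-the-square case analysis, so it is a gap of exposition rather than of substance.
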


Lemma \ref{lem:G=kF+H} limits $\{g=0\}$ to separate $\{f=0\}$ for only two cases.
Due to $\{f=0\}$ being separated, it must be disconnected so that it is of a hyperbolic type (under a suitable basis) adopting either the following form
$$f(x)=-x_1^2+\delta (x_{2}^2+\cdots + x_m^2)+1,~\delta\in\{0,1\};$$
or
$$f(x)=x_1^2-\delta (x_{2}^2+\cdots + x_m^2)-1,~\delta\in\{0,1\}.$$
See Nguyen and Sheu \cite[2020]{separation}. By Lemma \ref{lem:G=kF+H}, when $\{g=0\}$ separates $\{f=0\}$, we know $B = \lambda A$ so that $g$ is either affine (in case $\lambda=0$), or is also of a hyperbolic type like $f$ (in case $\lambda\not=0$). The former one results in an affine level set $\{g=0\}$ to separate a hyperbolic level set $\{f=0\}$ as illustrated by Figure \ref{fig:sep-intro-level-not-sublevel}. The letter is a case that a hyperbolic level set $\{g=0\}$ separates another hyperbolic level set $\{f=0\}.$ See Figure \ref{fig:sep-intro-diff-level-1} for an example. In either case, Lemma \ref{lem:G=kF+H} reduces the separation of $\{f=0\}$ by $\{g=0\}$ to the separation of $\{f=0\}$ by a hyperplane $\{-\lambda f + g = 0\}$ with a proper parameter $\lambda.$ The following lemma characterizes the necessary and sufficient conditions for an affine level set to separate a quadratic level set.

\begin{lemma}[Nguyen and Sheu {\cite[2020]{separation}}] \label{lem:chckable}
Suppose that $h(x) = c^T x + c_0$ is affine. Then $\{h=0\}$ separates $\{f=0\}$ if and only if either $\left( \bar{f}, \bar{A}, \bar{a} \right) = \left( f, A, a \right)$ or $\left( \bar{f}, \bar{A}, \bar{a} \right) = \left( -f, -A, -a \right)$ satisfies the following three conditions:
\begin{enumerate}[(i)]
\setlength{\itemsep}{4pt}
\item $\bar{A}$ has exactly one negative eigenvalue, $\bar{a} \in \mathcal{R}(\bar{A})$
\item $c \in \mathcal{R}(\bar{A})$, $c \neq 0$
\item $V^T \bar{A} V \succeq 0$, $\bar{w} \in \mathcal{R}(V^T \bar{A} V)$, and $\bar{f}(x_0)-\bar{w}^T (V^T \bar{A} V^T)^{\dagger} \bar{w} > 0$,
\end{enumerate}
where $\bar{w} = V^T(\bar{A}x_0+\bar{a})$, $x_0 = \frac{-c_0}{c^T c} c$, and $V \in \mathbb{R}^{n \times (n-1)}$ is the matrix basis for $\mathcal{N}(c^T)$.
\end{lemma}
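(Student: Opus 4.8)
The plan is to reduce everything to the behavior of $\bar f$ on the affine subspace $\{h=0\}$, parametrized as $x = x_0 + Vy$ with $y\in\mathbb{R}^{n-1}$. A direct expansion gives $\bar f(x_0+Vy) = y^T(V^T\bar A V)y + 2\bar w^T y + \bar f(x_0)$ with $\bar w = V^T(\bar A x_0+\bar a)$, so condition (iii) is precisely the statement that this restricted quadratic is bounded below (its Hessian $V^T\bar A V$ is PSD and $\bar w\in\mathcal{R}(V^T\bar A V)$) and attains a strictly positive minimum $m=\bar f(x_0)-\bar w^T(V^T\bar A V)^{\dagger}\bar w>0$; equivalently, $\bar f>0$ everywhere on $\{h=0\}$. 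I would also use the elementary reformulation that $\{h=0\}$ separates $\{f=0\}$ if and only if $\{f=0\}\cap\{h=0\}=\emptyset$ while both $\{f=0\}\cap\{h>0\}$ and $\{f=0\}\cap\{h<0\}$ are nonempty (take $L^-=\{f=0\}\cap\{h<0\}$ and $L^+=\{f=0\}\cap\{h>0\}$ in Definition~\ref{def-separation}).

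For sufficiency I would assume (i)--(iii) for some sign choice $\bar f$ and let $v_0$ be a unit eigenvector for the unique negative eigenvalue $-\lambda_0<0$ of $\bar A$. The key transversality observation is that $V^T\bar A V\succeq0$ forces $c^Tv_0\neq0$: otherwise $v_0=Vz$ for some $z$, whence $z^T(V^T\bar A V)z=v_0^T\bar A v_0=-\lambda_0<0$, a contradiction. I would then take the minimizer $x^*=x_0+Vy^*$ of $\bar f$ on $\{h=0\}$, so $\bar f(x^*)=m>0$, and restrict $\bar f$ to the transverse line $t\mapsto x^*+tv_0$:
\[
\phi(t)=\bar f(x^*+tv_0)=m+2\gamma t-\lambda_0 t^2,\qquad \gamma=(\bar A x^*+\bar a)^Tv_0.
\]
Since $\phi$ is a downward parabola with $\phi(0)=m>0$, its two real roots $t_1,t_2$ have product $-m/\lambda_0<0$ and hence opposite signs; the points $x^*+t_iv_0$ lie on $\{\bar f=0\}=\{f=0\}$, and $h(x^*+t_iv_0)=t_i(c^Tv_0)$ have opposite signs. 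Together with $\bar f>0$ on $\{h=0\}$ (which gives disjointness), this exhibits points of $\{f=0\}$ strictly on each side, so $\{h=0\}$ separates $\{f=0\}$.

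For necessity I would suppose $\{h=0\}$ separates $\{f=0\}$. By Remark~(\ref{defRemark-affine}) the set $\{f=0\}$ is disconnected, so by the canonical forms recalled in the discussion preceding the lemma, exactly one sign choice $\bar f\in\{f,-f\}$ puts $\bar A$ into the shape with exactly one negative eigenvalue and $\bar a\in\mathcal{R}(\bar A)$, giving (i). A constant affine function cannot separate anything, so $c\neq0$. If $c\notin\mathcal{R}(\bar A)$ there is $v\in\mathcal{N}(\bar A)$ with $c^Tv\neq0$; since $\bar a\in\mathcal{R}(\bar A)$ makes $\bar f$ constant along $v$, any point of $\{f=0\}$ translates along $v$ to meet $\{h=0\}$, contradicting disjointness, so $c\in\mathcal{R}(\bar A)$ and (ii) holds. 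Finally, the hyperplane is connected and disjoint from $\{\bar f=0\}$, and it cannot be contained in either convex component of $\{\bar f<0\}$ (an unbounded ``cup'' cannot hold a full flat hyperplane); hence it lies in the connected region $\{\bar f>0\}$, so $\bar f>0$ on $\{h=0\}$, which forces $V^T\bar A V\succeq0$, $\bar w\in\mathcal{R}(V^T\bar A V)$, and $m>0$, i.e.\ (iii).

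I expect the main obstacle to be the transversality step in the sufficiency direction: proving that the algebraic condition $V^T\bar A V\succeq0$ geometrically guarantees the two branches of $\{f=0\}$ fall on opposite sides of $\{h=0\}$, for which the product-of-roots computation above is the crux. A secondary subtlety is ensuring the single sign choice $\bar f$ simultaneously satisfies (i) and (iii): the sign is pinned down by the signature in (i), and I must verify it is the same sign that makes $\bar f>0$ on the hyperplane, which is exactly where the ``a hyperplane cannot fit inside a cup'' fact is needed. The degenerate case of singular $\bar A$ should be absorbed uniformly by the generalized inverse in (iii) and by condition (ii), which rules out the hyperplane cutting across the flat directions of $\{f=0\}$.
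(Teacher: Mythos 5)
The paper itself gives no proof of Lemma~\ref{lem:chckable}; it is imported verbatim from the unpublished manuscript \cite{separation}, so your proposal can only be judged on its own merits. Your core reduction is correct and clean: parametrizing $\{h=0\}$ as $x=x_0+Vy$ shows condition (iii) is exactly the statement that $\bar f$ restricted to the hyperplane is a convex quadratic with attained, strictly positive minimum, i.e.\ $\bar f>0$ on $\{h=0\}$ (you also silently repair the statement's typo $(V^T\bar A V^T)^{\dagger}$ to $(V^T\bar A V)^{\dagger}$, which is the intended reading). The sufficiency half is complete and rigorous: the observation that $V^T\bar A V\succeq 0$ forces $c^Tv_0\neq 0$ for the negative eigenvector $v_0$, the root-product computation $t_1t_2=-m/\lambda_0<0$ along the transverse line through the restricted minimizer $x^*$, and the criterion of Proposition~\ref{prop:criterion} together yield separation.

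The genuine gap is in your necessity step for (iii). You derive $\bar f>0$ on $\{h=0\}$ from disjointness and connectedness alone, via the claim that ``an unbounded cup cannot hold a full flat hyperplane.'' That claim is false in precisely one case the lemma must cover: when $\bar A$ has exactly one negative and \emph{no} positive eigenvalues (the $\delta=0$ canonical form, $\{f=0\}$ a pair of parallel hyperplanes). Then the two components of $\{\bar f<0\}$ are half-spaces, which contain hyperplanes in abundance. Concretely, $f(x)=-x_1^2+1$ and $h(x)=x_1-2$: the sets $\{f=0\}$ and $\{h=0\}$ are disjoint, yet $\{h=0\}\subset\{f<0\}$, so ``disjoint $\Rightarrow\ \bar f>0$ on $\{h=0\}$'' fails. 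Of course, here $\{h=0\}$ does not separate $\{f=0\}$ (both sheets lie on the same side), which is exactly the hypothesis your step never invokes. The patch: by disjointness and connectedness, $\bar f$ has a constant sign on $\{h=0\}$; if that sign were negative in the degenerate case, $\{h=0\}$ would be parallel to and outside the slab bounded by the two sheets of $\{f=0\}$, putting both sheets strictly on one side of $\{h=0\}$ and contradicting the existence of $u^{\pm}\in\{f=0\}$ with $h(u^+)h(u^-)<0$ required by Definition~\ref{def-separation}. Two smaller points: your phrase ``exactly one sign choice'' satisfies (i) is inaccurate when $\bar A$ has signature with one positive and one negative eigenvalue (then both $f$ and $-f$ have exactly one negative eigenvalue) --- harmless, since the lemma asserts only a disjunction, and as you note the sign is ultimately pinned by positivity on the hyperplane; and your appeal to the canonical-form classification of disconnected quadric zero sets is a citation rather than a proof, which is defensible here only because the paper itself imports that classification (and the whole lemma) from \cite{separation} without proof.
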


Furthermore, when it happens that a hyperbolic level set $\{g=0\}$ separates another hyperbolic level set $\{f=0\},$ as Figure \ref{fig:sep-intro-diff-level-1} suggests, the reverse that $\{f=0\}$ separates $\{g=0\}$ also holds true.

\begin{lemma}[Nguyen and Sheu {\cite[2020]{separation}}] \label{lem:mutual-sep}
Suppose that both $f$ and $g$ are quadratic functions. If $\{g=0\}$ separates $\{f=0\}$, then $\{f=0\}$ separates $\{g=0\}$ also.
\end{lemma}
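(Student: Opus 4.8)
The plan is to reduce the symmetric-separation statement to a single affine hyperplane and then transport the checkable conditions of Lemma~\ref{lem:chckable} from $f$ to $g$. First, since $f$ and $g$ are both genuinely quadratic, I would apply Lemma~\ref{lem:G=kF+H} to the hypothesis ``$\{g=0\}$ separates $\{f=0\}$'': this yields a scalar $\lambda$ with $B=\lambda A$ such that the hyperplane $\{-\lambda f+g=0\}$ separates $\{f=0\}$. Because $B\neq 0$, necessarily $\lambda\neq 0$, so $h:=g-\lambda f$ has vanishing quadratic part and is affine with nonzero $c$ (as separation forces condition (ii) of Lemma~\ref{lem:chckable}). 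Writing $g=\lambda f+h$, the crucial observation is that $-\tfrac1\lambda g+f=-\tfrac1\lambda h$, so $\{-\tfrac1\lambda g+f=0\}=\{h=0\}$ is the \emph{same} hyperplane. Hence, applying Lemma~\ref{lem:G=kF+H} in the reverse direction with $\mu=1/\lambda$ (so that $A=\mu B$), the desired conclusion ``$\{f=0\}$ separates $\{g=0\}$'' is \emph{equivalent} to the single claim that the affine hyperplane $\{h=0\}$ separates $\{g=0\}$.

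Next I would invoke Lemma~\ref{lem:chckable}. Fix the sign $\bar f\in\{f,-f\}$, with data $(\bar A,\bar a)$, for which (i)--(iii) certify that $\{h=0\}$ separates $\{f=0\}$; thus $\bar A$ has exactly one negative eigenvalue, $\bar a\in\mathcal R(\bar A)$, $c\in\mathcal R(\bar A)$ with $c\neq0$, and the strict inequality of (iii) holds. Put $\nu=|\lambda|>0$ and choose $\bar g\in\{g,-g\}$ so that the matrix of $\bar g$ equals $\nu\bar A$ (one of the two signs always works, according to $\operatorname{sign}(\lambda)$ and $\bar A=\pm A$). Setting $\bar h:=\bar g-\nu\bar f$, a direct check in each of the four sign cases gives $\bar h\in\{h,-h\}$, so $\bar h=\bar c^Tx+\bar c_0$ is affine with $\bar c=\pm c$ and $\{\bar h=0\}=\{h=0\}$. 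In particular the foot point $x_0=\frac{-c_0}{c^Tc}c$ and the basis $V$ of $\mathcal N(c^T)=\mathcal N(\bar c^T)$ are common to both functions, and it remains to verify (i)--(iii) for $\bar g$.

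To do so I would exploit that the affine structure makes everything scale. Since $x_0\in\{h=0\}$ we have $\bar h(x_0)=0$, hence $\bar g(x_0)=\nu\,\bar f(x_0)$. The matrix $\nu\bar A$ inherits from $\bar A$ (as $\nu>0$) both ``exactly one negative eigenvalue'' and the range space $\mathcal R(\nu\bar A)=\mathcal R(\bar A)$, which gives (i) once one notes the linear part $\nu\bar a+\tfrac12\bar c$ of $\bar g$ lies in $\mathcal R(\bar A)$ because $\bar a,\bar c\in\mathcal R(\bar A)$; condition (ii) for $\bar g$ is (ii) for $f$ up to the sign of $c$. For (iii), $V^T(\nu\bar A)V=\nu\,V^T\bar A V\succeq0$, and since $V^T\bar c=0$ the affine linear term drops out, so the associated vector is $\bar w_{\bar g}=V^T(\nu\bar A x_0+\nu\bar a)=\nu\,\bar w_{\bar f}\in\mathcal R(V^T(\nu\bar A)V)$. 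Finally, using $(V^T\nu\bar A V)^\dagger=\tfrac1\nu(V^T\bar A V)^\dagger$, the decisive quantity scales by $\nu>0$: $\bar g(x_0)-\bar w_{\bar g}^T(V^T\nu\bar A V)^\dagger\bar w_{\bar g}=\nu\bigl(\bar f(x_0)-\bar w_{\bar f}^T(V^T\bar A V)^\dagger\bar w_{\bar f}\bigr)>0$. Thus Lemma~\ref{lem:chckable} certifies that $\{h=0\}$ separates $\{g=0\}$, which by the reduction of the first paragraph yields that $\{f=0\}$ separates $\{g=0\}$.

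I expect the main obstacle to be the sign bookkeeping rather than any analytic difficulty: one must check that a single consistent sign choice $\bar f$ validates (i)--(iii) and then track how $\operatorname{sign}(\lambda)$ forces the correct $\bar g,\bar h$ so that the matrix of $\bar g$ again has exactly one negative eigenvalue (for $\lambda<0$ one is obliged to flip $g$ relative to $f$). The hypothesis that both $f$ and $g$ are quadratic enters precisely here, to guarantee $\lambda\neq0$; were $\lambda=0$, $g$ would be affine and $\{g=0\}$ could not be separated at all by remark~(\ref{defRemark-affine})'s companion. It is also worth noting that a more naive homotopy $f_t=(1-t)f+tg$ connecting the two hyperboloids fails when $\lambda<0$, since its quadratic part $(1+t(\lambda-1))A$ degenerates at $t=1/(1-\lambda)\in(0,1)$, where $\{f_t=0\}$ collapses onto $\{h=0\}$ itself; the Lemma~\ref{lem:chckable} argument sidesteps this by working with the fixed hyperplane and the positive factor $\nu$ throughout.
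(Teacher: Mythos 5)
The paper never proves Lemma \ref{lem:mutual-sep}: it is imported verbatim, without argument, from the unpublished manuscript \cite{separation}, so there is no in-paper proof to match yours against. Your proof is, as far as I can verify, correct, and it has the genuine merit of deriving the lemma from the other two results quoted from that same manuscript, Lemmas \ref{lem:G=kF+H} and \ref{lem:chckable}, so that within this paper's toolkit the statement becomes a theorem rather than a black box. The reduction is sound: both functions being genuinely quadratic forces $\lambda\neq 0$ in $B=\lambda A$, the witness $\mu=1/\lambda$ is the unique scalar with $A=\mu B$, and $\{-\mu g+f=0\}=\{h=0\}$ with $h=g-\lambda f$, so by Lemma \ref{lem:G=kF+H} everything hinges on the single claim that $\{h=0\}$ separates $\{g=0\}$ (separation being invariant under nonzero rescaling of the separator, $-\tfrac{1}{\lambda}h$ versus $h$ is immaterial). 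The certificate transport also checks out in all four sign cases: with $\nu=|\lambda|>0$ and $\bar g$ chosen so that its matrix is $\nu\bar A$, one indeed gets $\bar h=\bar g-\nu\bar f=\pm h$, the linear coefficient $\bar b=\nu\bar a+\tfrac12\bar c$ lies in $\mathcal{R}(\nu\bar A)=\mathcal{R}(\bar A)$ by (i) and (ii) for $f$; the foot point $x_0$ and the basis $V$ are unchanged under $(c,c_0)\mapsto(-c,-c_0)$; $V^T\bar c=0$ kills the linear correction so $\bar w_{\bar g}=\nu\bar w\in\mathcal{R}(V^T\nu\bar A V)$; $\bar h(x_0)=0$ gives $\bar g(x_0)=\nu\bar f(x_0)$; and $(\nu M)^\dagger=\nu^{-1}M^\dagger$ makes the decisive quantity in (iii) scale by the positive factor $\nu$. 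Two remarks. First, the closing step can be done marginally more economically via Proposition \ref{prop:linear_comb_preserve_separation}: writing $f=\tfrac1\lambda g-\tfrac1\lambda h$ and applying it to ``$\{h=0\}$ separates $\{g=0\}$'' yields the conclusion directly, without invoking Lemma \ref{lem:G=kF+H} a second time. Second, a caveat on circularity: since Lemmas \ref{lem:G=kF+H} and \ref{lem:chckable} come from the same source as the statement you prove, your argument is non-circular only relative to this paper's axiomatics; if \cite{separation} itself establishes Lemma \ref{lem:G=kF+H} using mutual separation, your derivation would not substitute for the proof there. Within the present paper, however, it is a complete and valid argument.
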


In the case of Lemma \ref{lem:mutual-sep}, we say that {\it $\{g=0\}$ and $\{f=0\}$ mutually separate.}

The following proposition shows that the separation property can be preserved under linear combinations.

\begin{proposition}\label{prop:linear_comb_preserve_separation}
If $\{g=0\}$ separates $\{f=0\}$, then $\{\eta f+\theta g=0\}$ separates $\{\sigma f=0\}$ for all $\eta, \theta, \sigma \in \mathbb{R}$ with $\theta \neq 0$, $\sigma \neq 0$.
\end{proposition}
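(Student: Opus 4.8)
The plan is to reduce the claim to a single observation about restriction. Since $\sigma \neq 0$, the set $\{\sigma f = 0\}$ coincides with $\{f=0\}$, so it suffices to prove that the level set $\{\eta f + \theta g = 0\}$ separates $\{f=0\}$. Writing $h = \eta f + \theta g$, the crucial point is that on the set $\{f=0\}$ the term $\eta f$ vanishes identically, so that $h(x) = \theta g(x)$ for every $x \in \{f=0\}$. Thus the restriction of the candidate separator $h$ to the separated set is simply a nonzero scalar multiple of $g$, and the separation structure of $g$ transfers to $h$ unchanged.

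First I would invoke the hypothesis that $\{g=0\}$ separates $\{f=0\}$ to obtain, via Definition \ref{def-separation}, nonempty subsets $L^-, L^+ \subseteq \{f=0\}$ with $L^- \cup L^+ = \{f=0\}$ and $g(u^-) g(u^+) < 0$ for all $u^- \in L^-$ and $u^+ \in L^+$. I would then propose exactly the same decomposition as a witness for the separation of $\{f=0\}$ by $\{h=0\}$, namely $M^- = L^-$ and $M^+ = L^+$. These are nonempty and cover $\{f=0\} = \{\sigma f = 0\}$ by construction, so the set-theoretic part of Definition \ref{def-separation} is immediate.

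The only remaining task is to verify the sign condition, and here the restriction identity does all the work: for any $v^- \in M^-$ and $v^+ \in M^+$, both points lie in $\{f=0\}$, whence $h(v^-) h(v^+) = (\theta g(v^-))(\theta g(v^+)) = \theta^2\, g(v^-) g(v^+) < 0$, using $\theta^2 > 0$ together with the strict sign opposition inherited from the hypothesis. This closes the argument. I expect no genuine analytic obstacle in this proposition: the entire content lies in reading the definition correctly—recognizing $\eta f + \theta g$ as the separator and $\sigma f$ as the function defining the separated set—and in noting that the coefficient $\eta$ is irrelevant because $\eta f$ contributes nothing on $\{f=0\}$, while $\theta \neq 0$ preserves the strict sign opposition and $\sigma \neq 0$ leaves the separated set intact.
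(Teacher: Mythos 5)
Your proposal is correct and follows essentially the same route as the paper's proof: both reuse the witness decomposition $L^-\cup L^+=\{f=0\}=\{\sigma f=0\}$ and verify the sign condition via $(\eta f+\theta g)(u)(\eta f+\theta g)(v)=\theta^2\,g(u)g(v)<0$ on the level set where $\eta f$ vanishes. No gaps to report.
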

\begin{proof}
Since $\{g=0\}$ separates $\{f=0\}$, there are non-empty subsets $L^+$ and $L^-$ of $ \{f = 0\}$ such that
\begin{equation*}
\begin{aligned}
&L^-\cup L^+ = \{f = 0\} ~ \text{ and } \\
&g(u^-)g(u^+)<0,~\forall~ u^-\in L^-;~\forall~u^+\in L^+.
\end{aligned}
\end{equation*}
By $\{f = 0\} = \{ \sigma f = 0 \}$ for any $\sigma \neq 0,$ we have $L^-\cup L^+ = \{\sigma f = 0\}$. Moreover, for any $\theta \neq 0$,
$$(\eta f + \theta g)(u)(\eta f + \theta g)(v) = \theta^2 g(u)g(v)<0 ~ \text{ for all } u \in L^+, v\in L^-,$$
which shows that $\{\eta f+\theta g=0\}$ separates $\{\sigma f=0\}$ for all $\theta \neq 0$, $\sigma \neq 0$.
\end{proof}

The following proposition can be viewed as the converse of Proposition \ref{prop:linear_comb_preserve_separation}.

\begin{proposition} \label{prop:two_sides_linear_comb}
Suppose that $\{\eta f  + \theta g = 0\}$ separates $\{\sigma f + \tau g = 0\}$ for some real numbers $\eta, \theta, \sigma, \tau$ with $\sigma \theta -\tau \eta \neq 0$. Then,
\begin{enumerate}[(a)]
\item if both $f$ and $g$ are quadratic functions, the 0-level sets $\{f=0\}$ and $\{g=0\}$ mutually separate each other;
\item if one of $f$ and $g$ is an affine function, then the other must be a quadratic function and the affine 0-level set separates the quadratic 0-level set.
\end{enumerate}
Namely, if $\{\eta f  + \theta g = 0\}$ separates $\{\sigma f + \tau g = 0\}$ with $\sigma \theta -\tau \eta \neq 0$, then $\{g=0\}$ separates $\{f=0\}$ or $\{f=0\}$ separates $\{g=0\}$.
\end{proposition}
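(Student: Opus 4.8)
The plan is to transport the hypothesis ``$\{\eta f+\theta g=0\}$ separates $\{\sigma f+\tau g=0\}$'' back to a statement about $\{f=0\}$ and $\{g=0\}$ by first reducing the separator to an affine one and then invoking the checkable characterization in Lemma~\ref{lem:chckable}. Write $F=\eta f+\theta g$ and $G=\sigma f+\tau g$, and note that the hypothesis $\sigma\theta-\tau\eta\neq0$ is exactly $\det\!\begin{pmatrix}\eta&\theta\\\sigma&\tau\end{pmatrix}=\eta\tau-\theta\sigma\neq0$, so $(F,G)$ is an invertible reparametrization of $(f,g)$. Since $\{G=0\}$ is separated it must be disconnected (Definition~\ref{def-separation} and the remark that a separated set is disconnected), so $G$ cannot be affine; in particular its matrix $\sigma A+\tau B\neq0$, and $f,g$ are not both affine. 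If $F$ is already affine I take the trivial reduction; otherwise I apply Lemma~\ref{lem:G=kF+H} to ``$\{F=0\}$ separates $\{G=0\}$'' to obtain $\lambda$ with (matrix of $F$)$=\lambda\,$(matrix of $G$) and an affine function $h=F-\lambda G=pf+qg$, where $p=\eta-\lambda\sigma$, $q=\theta-\lambda\tau$, whose zero set separates $\{G=0\}$. Here $(p,q)\neq(0,0)$ (otherwise $\Delta:=\eta\tau-\theta\sigma=0$) and $pA+qB=0$ because $h$ is affine.

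Next I read off the two cases from $pA+qB=0$; throughout write $M_{(\cdot)}$ for the symmetric matrix of the quadratic part of a quadratic. In case (a), where $A\neq0$ and $B\neq0$, one cannot have $q=0$ (that would force $A=0$), so $q\neq0$ and $B=\mu A$ with $\mu=-p/q$; writing $\ell=g-\mu f$ (an affine function) gives $h=q\ell$ and $G=\rho f+\tau\ell$ with $\rho=\sigma+\tau\mu$, where $\rho\neq0$ because the matrix of $G$ is $\rho A\neq0$. In case (b), say $f$ affine ($A=0$, the case $g$ affine being symmetric), then $qB=0$ forces $q=0$, $p\neq0$, so $h$ is a nonzero multiple of $f$ and $\tau\neq0$ (else $G$ is affine); thus $\{f=0\}$ separates $\{G=0\}$ and $G=\tau g+\sigma f$. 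In both cases I have reduced to the following uniform situation: an affine hyperplane $H$ (namely $\{\ell=0\}$ in case (a), $\{f=0\}$ in case (b)) whose zero set separates a quadric $\{P=0\}$ with $P=G$, and a target quadratic $Q$ ($Q=f$ in case (a), $Q=g$ in case (b)) satisfying $P-\kappa Q=$ (a multiple of the affine form defining $H$) with $\kappa\neq0$; consequently $M_P=\kappa M_Q$ and $P=\kappa Q$ \emph{on} $H$.

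The crux is the following transfer claim: under the uniform situation above, $H$ also separates $\{Q=0\}$. I would prove it with Lemma~\ref{lem:chckable}, which characterizes separation of a quadric by a hyperplane through conditions (i)--(iii). I extract (i)--(iii) for ``$H$ separates $\{P=0\}$'' and verify them for $Q$. Conditions (i)--(ii) transfer because $H$ has a common normal $c$ and $\mathcal{R}(\kappa M_Q)=\mathcal{R}(M_Q)$ for $\kappa\neq0$: from $c\in\mathcal{R}(M_P)$ and the linear part of $P$ lying in $\mathcal{R}(M_P)=\mathcal{R}(M_Q)$, together with $P=\kappa Q+$(multiple of the affine form), one recovers $c\in\mathcal{R}(M_Q)$ and the linear part of $Q$ in $\mathcal{R}(M_Q)$, while the ``exactly one negative eigenvalue'' requirement is a statement about the signature of $M_Q$, which is shared with $M_P=\kappa M_Q$ up to the factor $\operatorname{sign}(\kappa)$. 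The decisive observation is that condition (iii) is a property of the \emph{restriction} of the quadratic to $H$ only --- it asserts that this restriction is convex with strictly positive minimum, i.e.\ that $H$ passes strictly between the two sheets --- and on $H$ we have $P=\kappa Q$. Choosing the sign $\varepsilon_Q=\varepsilon_P\operatorname{sign}(\kappa)$ allowed in Lemma~\ref{lem:chckable} makes all three conditions hold simultaneously for $Q$, so $H$ separates $\{Q=0\}$.

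Finally I assemble the conclusion. In case (a) the transfer claim gives that $\{\ell=0\}=\{-\mu f+g=0\}$ separates $\{f=0\}$; combined with $B=\mu A$, Lemma~\ref{lem:G=kF+H} yields that $\{g=0\}$ separates $\{f=0\}$, and Lemma~\ref{lem:mutual-sep} then gives the reverse, i.e.\ mutual separation. In case (b) the transfer claim directly gives that the affine level set $\{f=0\}$ separates the quadric $\{g=0\}$ (and the reverse is impossible, since an affine level set cannot be separated), which is precisely statement (b); in particular $g$ is necessarily quadratic. In either case $\{g=0\}$ separates $\{f=0\}$ or $\{f=0\}$ separates $\{g=0\}$, proving the final assertion. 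I expect the main difficulty to lie in the transfer claim, specifically in rigorously justifying that condition (iii) of Lemma~\ref{lem:chckable} depends only on the restriction of the quadratic to $H$ and in tracking the sign choices $\varepsilon_P,\varepsilon_Q$ and the range conditions of (i)--(ii) consistently; the reduction steps are routine linear algebra once the determinant $\Delta\neq0$ is exploited.
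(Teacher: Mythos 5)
Your proposal is correct, but it takes a genuinely different route from the paper's. The paper's proof never reduces to an affine separator at all: it applies Proposition~\ref{prop:linear_comb_preserve_separation} --- which only permits recombining the \emph{separator} with multiples of the separated function and rescaling the separated one --- and, to get around the fact that the separated set cannot be directly recombined, it invokes Lemma~\ref{lem:mutual-sep} in between to swap the roles of the two (then both quadratic) level sets, finishing with a second application of the proposition; the $\tau=0$ case needs only one application. You instead invoke Lemma~\ref{lem:G=kF+H} to replace the separator $\eta f+\theta g$ by an affine $h=pf+qg$, read the case split off $pA+qB=0$, and rest everything on your transfer claim: if a hyperplane $H=\{\ell=0\}$ separates $\{P=0\}$ and $P=\kappa Q+m\ell$ with $\kappa\neq 0$, then $H$ separates $\{Q=0\}$. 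That claim is true, and your Lemma~\ref{lem:chckable}-based sketch does go through: with $\varepsilon_Q=\varepsilon_P\operatorname{sign}(\kappa)$ one gets $\varepsilon_Q M_Q=\varepsilon_P M_P/|\kappa|$ (same signature and ranges); the linear-term vector of $Q$ lies in $\mathcal{R}(M_P)=\mathcal{R}(M_Q)$ because those of $P$ and $c$ do; and since $V^Tc=0$ and $\ell(x_0)=0$, the data of condition (iii) for $Q$ (with the paper's ``$(V^T\bar A V^T)^\dagger$'' read as $(V^T\bar A V)^\dagger$) are exactly $1/|\kappa|$ times those for $P$, so (i)--(iii) transfer verbatim. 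Your transfer claim is precisely the symmetric counterpart of Proposition~\ref{prop:linear_comb_preserve_separation} --- modifying the \emph{separated} function along the separator --- which the paper deliberately avoids needing by using mutual separation instead. What each approach buys: the paper's argument is shorter, purely set-theoretic and lemma-light (Lemma~\ref{lem:chckable} is reserved for the algorithmic Section~\ref{sec:algorithm}), while yours is more computational but anticipates the structure of Corollary~\ref{cor:noncvx_linear_sep} and Lemma~\ref{lem:liner_sep_checkable} and yields the reusable transfer claim as a by-product. One small step to make explicit when writing it up: in case (b), before concluding $q=0$ from $qB=0$ you need $B\neq 0$, which follows because $G$ is separated hence non-affine, so with $f$ affine the function $g$ cannot also be affine.
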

\begin{proof}
Since $\{\eta f  + \theta g = 0\}$ separates $\{\sigma f + \tau g = 0\}$, $\sigma f + \tau g$ cannot be affine function so that one of $f$ and $g$ is quadratic. Let us assume that $f$ is quadratic.

\noindent{-} If $\tau \neq 0$: by Proposition \ref{prop:linear_comb_preserve_separation}, we have
$$\left\{\frac{\theta}{\sigma \theta -\tau \eta} (\sigma f + \tau g)  -\frac{\tau}{\sigma \theta -\tau \eta}(\eta f + \theta g)=0\right\}\hbox{ separates }\left\{\frac{1}{\tau}(\sigma f + \tau g)=0\right\}. $$
After simplifying,
$$\{f = 0\} \text{ separates } \left\{ \frac{\sigma}{\tau} f + g = 0 \right\}.$$
Since both $f$ and $\frac{\sigma}{\tau} f + g$ are quadratic, Lemma \ref{lem:mutual-sep} ensures mutual separation so that $\{ \frac{\sigma}{\tau} f + g = 0\}$ separates $\{f = 0\}$, too. Applying Proposition \ref{prop:linear_comb_preserve_separation} again, we have $\{g=0\}$ separates $\{f=0\}$.

\noindent{-} If $\tau = 0$: our assumption reduces to $\{\eta f  + \theta g = 0\}$ separates $\{\sigma f = 0\}$. Since $\sigma \theta -\tau \eta \neq 0$, we have $\sigma \neq 0$ and $\theta \neq 0$. By Proposition \ref{prop:linear_comb_preserve_separation},
$$\left\{-\frac{\eta}{\sigma \theta} (\sigma f) + \frac{1}{\theta}(\eta f + \theta g)=0\right\}\hbox{ separates }\left\{\frac{1}{\sigma}(\sigma f)=0\right\}. $$
It yields $\{g = 0\} \text{ separates } \{f = 0\}.$

Therefore, if a linear combination of $f$ and $g$ separates another linearly independent combination of them, and if $f$ is quadratic, we have $\{g=0\}$ separates $\{f=0\}$. Similarly, when $g$ is quadratic, we obtain $\{f=0\}$ separates $\{g=0\}$. In summary, under the assumption, we have
\begin{align}
f \text{ is quadratic } &\Longrightarrow  \{g=0\} \text{ separates } \{f=0\} \label{eq:pf-prop-f-quadratic} \\
g \text{ is quadratic } &\Longrightarrow  \{f=0\} \text{ separates } \{g=0\}. \label{eq:pf-prop-g-quadratic}
\end{align}
Since one of $f$ and $g$ must be quadratic, one of (\ref{eq:pf-prop-f-quadratic}) and (\ref{eq:pf-prop-g-quadratic}) must happen. Hence, 
\begin{itemize}
\item[-] when both $f$ and $g$ are quadratic, $\{f=0\}$ and $\{g=0\}$ mutually separate;
\item[-] when one of $f$ and $g$ is affine, the affine 0-level set separates the quadratic 0-level set.
\end{itemize}
\end{proof}

\begin{proposition}\label{prop:criterion}
The 0-level set $\{g=0\}$ separates the 0-level set $\{f=0\}$ if and only if
\begin{align}
&\{f=0\}\cap\{g=0\}=\emptyset ~ \text{ and } \label{eq:alt_def-1} \\
&g(u)g(v)<0 ~ \text{ for some }~u, v \in\{f=0\}. \label{eq:alt_def-2}
\end{align}
\end{proposition}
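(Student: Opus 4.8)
The plan is to prove the equivalence by directly unfolding Definition~\ref{def-separation} specialized to two level sets, since this proposition merely reformulates that definition into a form that avoids having to exhibit an explicit partition. The one conceptual point to keep in mind throughout is that the universally quantified inequality $g(u^-)g(u^+)<0$ forces $g$ to carry a single sign on each of $L^-$ and $L^+$, with opposite signs on the two pieces, and in particular to be nowhere zero on $\{f=0\}=L^-\cup L^+$.

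For the necessity direction (separation $\Longrightarrow$ \eqref{eq:alt_def-1} and \eqref{eq:alt_def-2}), I would start from non-empty subsets $L^-,L^+$ with $L^-\cup L^+=\{f=0\}$ and $g(u^-)g(u^+)<0$ for all $u^-\in L^-,\,u^+\in L^+$. Condition \eqref{eq:alt_def-2} is then immediate: choosing any $u\in L^-$ and $v\in L^+$ (both non-empty) produces two points of $\{f=0\}$ with $g(u)g(v)<0$. For \eqref{eq:alt_def-1} I would argue by contradiction: if some $x\in\{f=0\}\cap\{g=0\}$ existed, then $x\in L^-\cup L^+$, say $x\in L^-$; picking any $u^+\in L^+$ would require $g(x)g(u^+)<0$, contradicting $g(x)=0$. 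The case $x\in L^+$ is identical, so $\{f=0\}\cap\{g=0\}=\emptyset$.

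For the sufficiency direction, I would construct the canonical sign partition $L^-=\{f=0\}\cap\{g<0\}$ and $L^+=\{f=0\}\cap\{g>0\}$. Condition \eqref{eq:alt_def-1} guarantees that $g$ never vanishes on $\{f=0\}$, whence $L^-\cup L^+=\{f=0\}$; condition \eqref{eq:alt_def-2} supplies points of $\{f=0\}$ with opposite $g$-signs, so one lies in $L^-$ and the other in $L^+$, making both non-empty. The required product inequality $g(u^-)g(u^+)<0$ for every $u^-\in L^-,\,u^+\in L^+$ then holds by the very construction of the partition, verifying all clauses of Definition~\ref{def-separation}.

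I do not expect a substantial obstacle here: the entire content of the proposition is the observation that the seemingly general partition in the definition may, without loss of generality, be taken to be the partition of $\{f=0\}$ according to the sign of $g$. The only step deserving care is recognizing that the ``for all'' quantifier in \eqref{eq:def_sep} already encodes both that $g$ is single-signed (and nonzero) on each piece and that the two signs differ, which is exactly the information repackaged by conditions \eqref{eq:alt_def-1}--\eqref{eq:alt_def-2}.
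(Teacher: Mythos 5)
Your proposal is correct and follows essentially the same route as the paper: the sufficiency direction uses the identical sign partition $L^- = \{f=0\}\cap\{g<0\}$, $L^+ = \{f=0\}\cap\{g>0\}$, and the necessity direction (which the paper dismisses as immediate from Definition~\ref{def-separation}) is spelled out by you with the same straightforward contradiction argument. No gaps; your write-up merely makes explicit what the paper leaves to the reader.
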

\begin{proof} The necessity follows directly from Definition \ref{def-separation}. It suffices to prove the sufficiency. From
\eqref{eq:alt_def-1}, we know $\{f=0\}\subset \{g<0\}\cup\{g>0\}.$ Let
\begin{equation} \label{eq:pf-L+-}
L^- = \{f=0\} \cap \{g<0\} ~ \text{ and } ~ L^+ = \{f=0\} \cap \{g>0\}.
\end{equation}
Due to (\ref{eq:alt_def-2}), $L^-\not=\emptyset$ and $L^+\not=\emptyset$.
Therefore, $\{f=0\} = L^+ \cup L^-$. Finally, (\ref{eq:pf-L+-}) directly implies that $g(u^+)g(u^-)<0$ for all $u^+ \in L^+$ and $u^- \in L^-.$ The proof is  complete.
\end{proof}
\section{Characterising Non-convexity of $\mathbf{R}(f,g)$ by Separation of Level Sets} \label{sec:sep_JNR}
Now we are ready to show that the non-convexity of the joint range $\mathbf{R}(f,g)$ for two quadratic mappings, homogeneous or not, can be completely characterized by the separation feature of their level sets.
This constitutes the main theme of the section.

\begin{theorem} \label{thm:noncvx_sep}
The joint numerical range $\mathbf{R}(f,g)$ is non-convex if and only if there exists $\alpha, \beta \in \mathbb{R}$ such that $\{g=\beta\}$ separates $\{f=\alpha\}$ or $\{f=\alpha\}$ separates $\{g=\beta\}$. More precisely,
\begin{enumerate}[(a)]
\item when both $f$ and $g$ are quadratic functions, $\mathbf{R}(f,g)$ is non-convex if and only if $(\exists \alpha, \beta \in \mathbb{R})~\{f=\alpha\}$ and $\{g=\beta\}$ mutually separates each other;
\item when one of $f$ and $g$ is affine, $\mathbf{R}(f,g)$ is non-convex if and only if $(\exists \alpha, \beta \in \mathbb{R})$ the affine $\beta$-level set separates the quadratic $\alpha$-level set.
\end{enumerate}
\end{theorem}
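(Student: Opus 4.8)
The plan is to prove both implications by reducing everything to the algebraic separation criterion of Proposition~\ref{prop:criterion} and then ``unmixing'' a linear combination through Proposition~\ref{prop:two_sides_linear_comb}.

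For the easy direction (separation $\Rightarrow$ non-convexity), suppose without loss of generality that $\{g=\beta\}$ separates $\{f=\alpha\}$. By Proposition~\ref{prop:criterion} we have $\{f=\alpha\}\cap\{g=\beta\}=\emptyset$ and there exist $u,v$ with $f(u)=f(v)=\alpha$ and $(g(u)-\beta)(g(v)-\beta)<0$. Then $F(u)=(\alpha,g(u))$ and $F(v)=(\alpha,g(v))$ both lie in $\mathbf{R}(f,g)$ on the vertical line $\{s=\alpha\}$, and the point $(\alpha,\beta)$ lies strictly between them. Were $(\alpha,\beta)\in\mathbf{R}(f,g)$, its preimage would be a point of $\{f=\alpha\}\cap\{g=\beta\}=\emptyset$, a contradiction; hence the open segment joining $F(u)$ and $F(v)$ leaves $\mathbf{R}(f,g)$ and the range is non-convex.

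For the main direction (non-convexity $\Rightarrow$ separation), I would start from a witness of non-convexity: points $p_1=F(x_1)$, $p_2=F(x_2)\in\mathbf{R}(f,g)$ and a point $q$ strictly interior to the segment $[p_1,p_2]$ with $q\notin\mathbf{R}(f,g)$. Writing the line through $p_1,p_2$ as $\{(s,t):\eta s+\theta t=\gamma\}$ with $(\eta,\theta)\neq0$, I would choose a companion direction $(\sigma,\tau)$ with $\sigma\theta-\tau\eta\neq0$ that measures position along the line, and set $c=\sigma q_1+\tau q_2$. The claim is then that $\{\sigma f+\tau g=c\}$ separates $\{\eta f+\theta g=\gamma\}$, which I would verify through Proposition~\ref{prop:criterion}: the points $x_1,x_2$ lie on $\{\eta f+\theta g=\gamma\}$; since $q$ is strictly interior and $\sigma s+\tau t$ is strictly monotone along the line, the values of $\sigma s+\tau t$ at $p_1$ and $p_2$ straddle $c$, giving $\sigma f+\tau g-c$ opposite signs at $x_1$ and $x_2$; and any common point $w$ of $\{\eta f+\theta g=\gamma\}$ and $\{\sigma f+\tau g=c\}$ would force $F(w)$ to solve the invertible $2\times2$ system determined by $(\eta,\theta),(\sigma,\tau)$ and hence equal $q$, contradicting $q\notin\mathbf{R}(f,g)$; thus the intersection is empty.

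Finally I would unmix the constants. Since $\eta\tau-\theta\sigma\neq0$, the linear system $\eta\alpha+\theta\beta=\gamma$, $\sigma\alpha+\tau\beta=c$ has a unique solution $(\alpha,\beta)$, and then
\[
\eta f+\theta g-\gamma=\eta(f-\alpha)+\theta(g-\beta),\qquad \sigma f+\tau g-c=\sigma(f-\alpha)+\tau(g-\beta),
\]
so $\{\sigma(f-\alpha)+\tau(g-\beta)=0\}$ separates $\{\eta(f-\alpha)+\theta(g-\beta)=0\}$. Applying Proposition~\ref{prop:two_sides_linear_comb} to the pair $(f-\alpha,\,g-\beta)$ (noting that shifting by constants preserves whether a function is quadratic or affine, and that the determinant hypothesis is exactly $\sigma\theta-\tau\eta\neq0$) yields case~(a) when both are quadratic, namely that $\{f=\alpha\}$ and $\{g=\beta\}$ mutually separate, and case~(b) when one is affine, namely that the affine level set separates the quadratic one. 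The main obstacle I anticipate is precisely the geometric translation in the third paragraph: extracting from the single missing point $q$ both the disjointness and the sign-change conditions of Proposition~\ref{prop:criterion}, and arranging the companion functional $(\sigma,\tau)$ so that the $2\times2$ system stays invertible, which is what makes the subsequent unmixing legitimate.
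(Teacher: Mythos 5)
Your proposal is correct and follows essentially the same route as the paper: the sufficiency argument via three collinear points $M=(\alpha,g(u^{\ast}))$, $N=(\alpha,g(v^{\ast}))$, $K=(\alpha,\beta)$ on a vertical line, and the necessity argument via Proposition~\ref{prop:criterion} applied to the level sets of two linear combinations of $f$ and $g$ determined by the line through the witness points, followed by Proposition~\ref{prop:two_sides_linear_comb} to unmix the combinations. The only (harmless) deviations are that you allow an arbitrary transversal functional $(\sigma,\tau)$ with $\sigma\theta-\tau\eta\neq0$ where the paper fixes the perpendicular direction $(-\nu,\mu)$, and your orientation of the intermediate separation (the transversal level set separating the one containing $x_1,x_2$) is in fact the correct reading of Proposition~\ref{prop:criterion}, whereas the paper's display \eqref{dfg-1} states it with the roles swapped --- immaterial, since Proposition~\ref{prop:two_sides_linear_comb} yields the symmetric conclusion either way.
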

\begin{proof}
For convenience, let us adopt
$$\mathbf{R}(f,g) =
\left\{ (t,k) \in \mathbb{R}^2
\left| ~
\begin{array}{r}
t = f(x)  \\
k = g(x)
\end{array} ~ , ~ x \in \mathbb{R}^n
\right. \right\}.$$
[Proof for necessity]: The set $\mathbf{R}(f,g)$ is non-convex if and only if there exists a triple of points $(M, N, K)$ satisfying
\begin{align}
&M, N \in \mathbf{R}(f,g), \label{eq:noncvx_segment-1} \\
&M, N, K \text{ are colinear}, \label{eq:noncvx_segment-2} \\
&K \text{ lies between the segment } \overline{MN}, \label{eq:noncvx_segment-3} \\
&K \notin \mathbf{R}(f,g). \label{eq:noncvx_segment-4}
\end{align}

\begin{figure}
\centering
\includegraphics[width=0.6\linewidth]{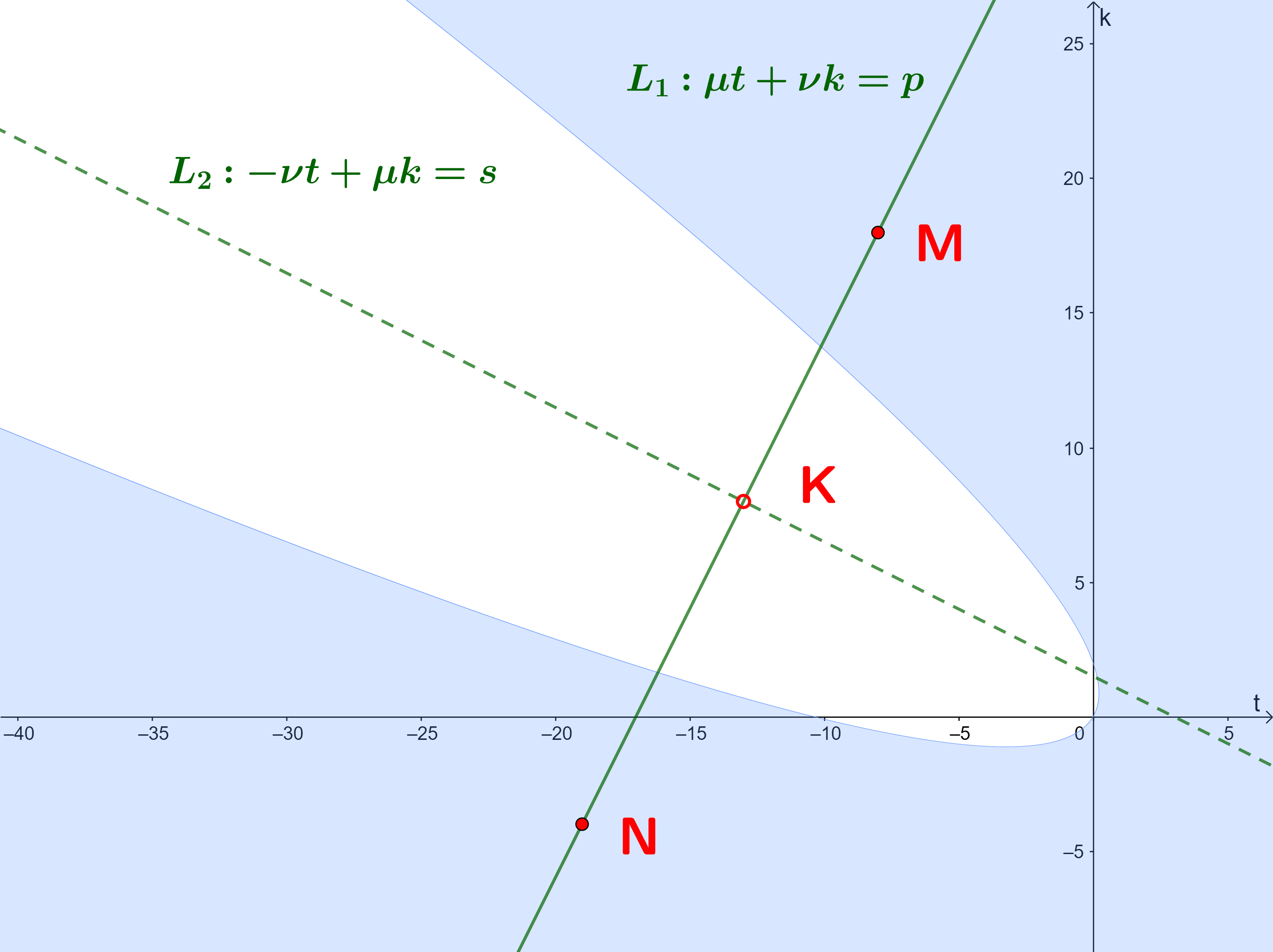}
\caption{Graph for Proof of Theorem \ref{thm:noncvx_sep}.}
\label{fig:for-proof}
\end{figure}

Let $L_1 :  \mu t + \nu k = p,~\mu^2+\nu^2 \neq 0$ be the line passing through $M$, $N$, and $K$. Since both $M$ and $N$ lie in $L_1 \cap \mathbf{R}(f,g)$, there exists some $u^{\ast}, v^{\ast}$ such that
$M = \left( f(u^{\ast}) , g(u^{\ast}) \right),~N = \left( f(v^{\ast}), g(v^{\ast}) \right)$ and
\begin{equation}\label{eq:pf_alt-def-2.*}
u^{\ast}, v^{\ast} \in \left\{ x \in \mathbb{R}^n |~ \mu f(x) + \nu g(x) = p \right\}.
\end{equation}

Moreover, let $L_2 : -\nu t + \mu k = s$ be perpendicular to $L_1$ with the intersection $K$.  Since the two points $M$ and $N$ lie in different sides of $L_2$,
\begin{equation} \label{eq:pf_alt-def-2}
\left( -\nu f(u^{\ast}) + \mu g(u^{\ast}) - s \right)\left( -\nu f(v^{\ast}) + \mu g(v^{\ast}) - s \right) < 0.
\end{equation}
In addition, $K\not\in\mathbf{R}(f,g),$ we have
\begin{equation} \label{eq:pf_alt-def-1}
\left\{ x \in \mathbb{R}^n |~ \mu f(x) + \nu g(x) - p = 0 \right\} \cap \left\{ x \in \mathbb{R}^n | -\nu f(x) + \mu g(x) - s = 0 \right\} = \emptyset.
\end{equation}

Combining (\ref{eq:pf_alt-def-2.*}), (\ref{eq:pf_alt-def-2}), and (\ref{eq:pf_alt-def-1}) together, we apply Proposition \ref{prop:criterion} to conclude that
\begin{equation}\label{dfg-1}
\{\mu f + \nu g = p\}~\hbox{ separates }~\{-\nu f + \mu g = s\}.
\end{equation}
Since $\mu^2+\nu^2 \neq 0$, \eqref{dfg-1} can be recast as
$$
\left\{ \mu \left( f-\alpha \right) + \nu (g-\beta) = 0 \right\} \text{ separates } \left\{-\nu (f-\alpha) + \mu \left(g-\beta \right) = 0 \right\},
$$
where $\alpha = \frac{p \mu - s \nu}{\mu^2+\nu^2}$ and $\beta = \frac{p \nu + s \mu}{\mu^2+\nu^2}$. The nessesity part of the theorem follows immediately from
Proposition \ref{prop:two_sides_linear_comb}.

\vskip0.2cm
\noindent [Proof for sufficiency]: Suppose that $\{g=\beta\}$ separates $\{f=\alpha\}$ for some $\alpha, \beta \in \mathbb{R}$. Proposition \ref{prop:criterion} implies that
\begin{align}
&\{g=\beta\}\cap\{f=\alpha\}=\emptyset \label{eq:ex_general_sep_cond1}\\
&(g(u^{\ast})-\beta)(g(v^{\ast})-\beta)<0 \text{ for some } u^{\ast}, v^{\ast} \in \{f=\alpha\}. \label{eq:ex_general_sep_cond2}
\end{align}
Set the points $M, N, K$ as
\begin{align*}
M &= \left( f(u^{\ast}) , g(u^{\ast}) \right) = \left( \alpha , g(u^{\ast}) \right), ~ \\
N &= \left( f(v^{\ast}) , g(v^{\ast}) \right) = \left( \alpha , g(v^{\ast}) \right), ~ \\
K &= \left( \alpha , \beta \right).
\end{align*}
Then, $M, N, K$ lie on the same vertical line $t=\alpha$. Moreover, $M, N \in \mathbf{R}(f,g)$ and, by (\ref{eq:ex_general_sep_cond2}), $K$ lies between $M$ and $N.$
However, by (\ref{eq:ex_general_sep_cond1}), it is impossible to have $x\in\mathbb{R}^n$ such that $f(x)=\alpha$ and $g(x)=\beta$ so that $K \notin \mathbf{R}(f,g)$. Therefore, $\mathbf{R}(f,g)$ is non-convex. The other symmetry case that $\{f=\alpha\}$ separates $\{g=\beta\}$ for some $\alpha, \beta \in \mathbb{R}$ can be analogously proved. The proof of Theorem \ref{thm:noncvx_sep} is thus complete.
\end{proof}

\begin{example} \label{ex:main}
Let two quadratic functions $f,~g$ be
\begin{align*}
f(x,y) &= -\frac{\sqrt{3}}{2} x^2 + \frac{\sqrt{3}}{2} y^2 + x - \frac{1}{2} y \\
g(x,y) &= \frac{1}{2} x^2 - \frac{1}{2} y^2 + \sqrt{3} x - \frac{\sqrt{3}}{2} y.
\end{align*}
Figure \ref{fig:ex_main-level} shows that $\{g=2\}$ and $\{f=-4\}$ mutually separate, and Figure \ref{fig:ex_main-JNR} shows that the joint range $\mathbf{R}(f,g)$ is non-convex. These two figures justify our main theorem, Theorem \ref{thm:noncvx_sep}.

\begin{figure}
\centering
\begin{subfigure}[t]{0.485\linewidth}
	\centering
	\captionsetup{width=\linewidth}
	\includegraphics[width=\linewidth]{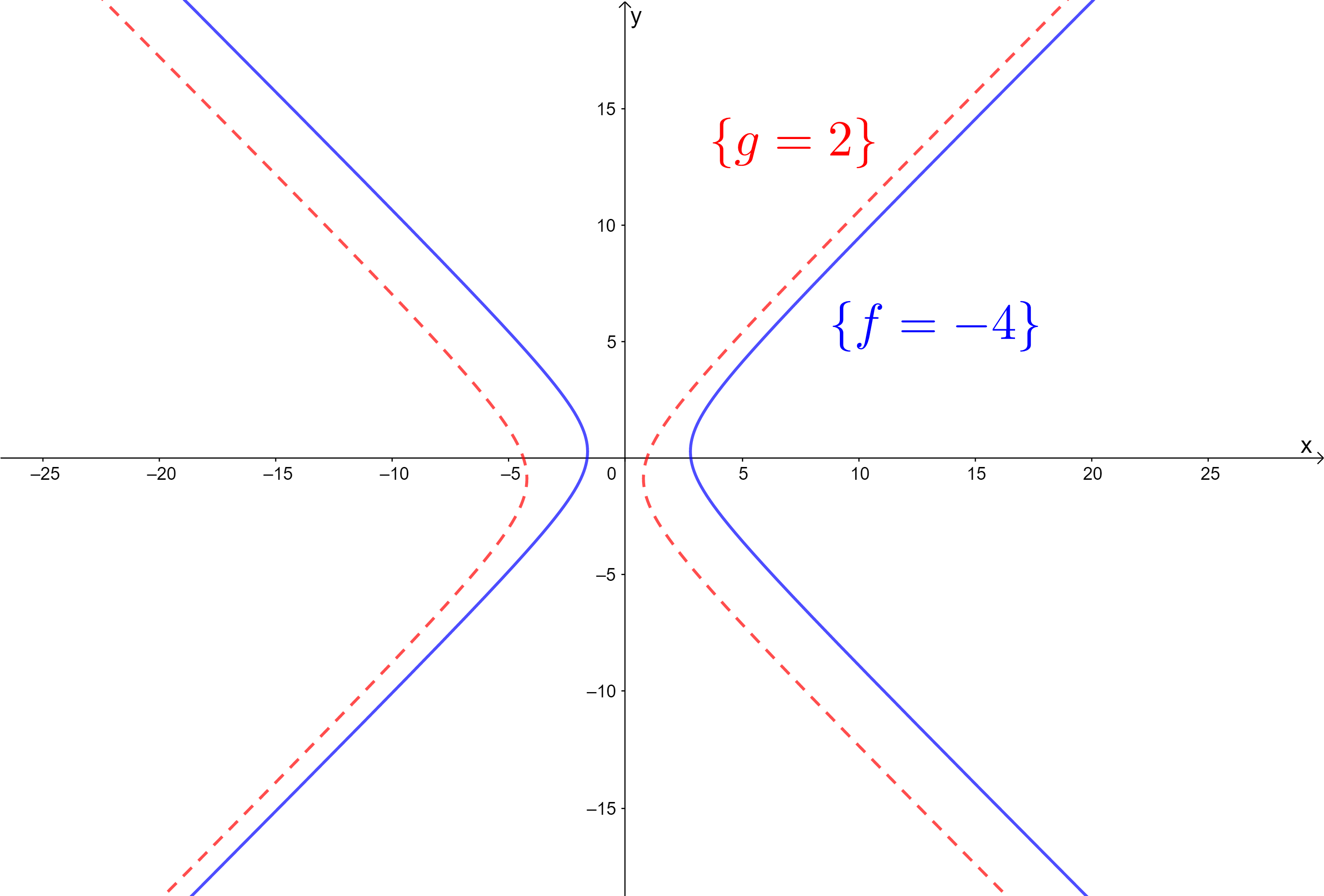}
	\caption{The level set $\{g=2\}$ and the level set $\{f=-4\}$ mutually separate.}
	\label{fig:ex_main-level}
\end{subfigure}
\hfill
\begin{subfigure}[t]{0.485\linewidth}
	\centering
	\captionsetup{width=\linewidth}
	\includegraphics[width=\linewidth]{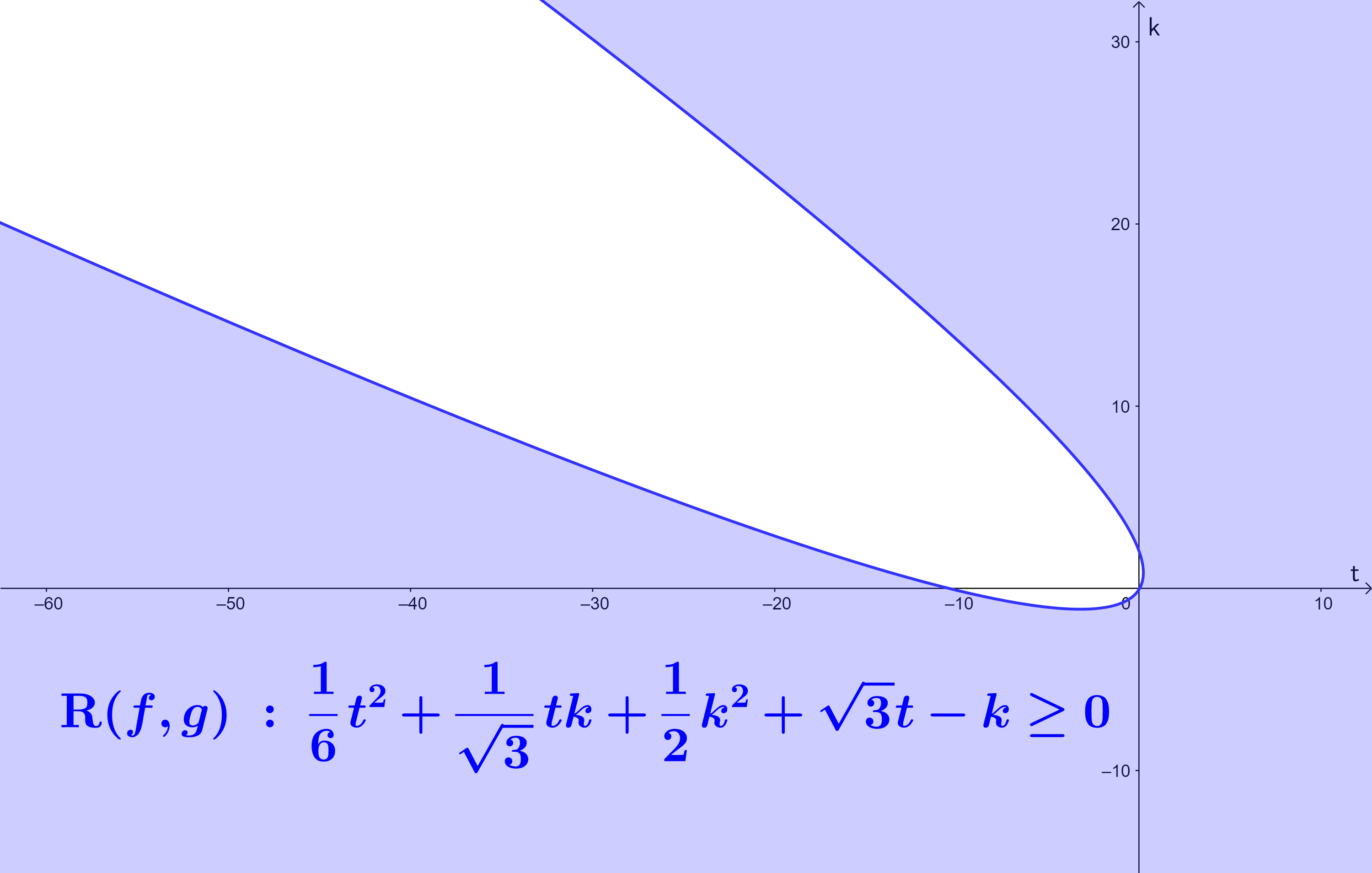}
	\caption{The joint range $\mathbf{R}(f,g)$ is non-convex while $\{g = \beta\}$ and $\{f=\alpha\}$ mutually separate.}
	\label{fig:ex_main-JNR}
\end{subfigure}
\caption{For Example \ref{ex:main}. Let $f(x,y) = -\frac{\sqrt{3}}{2} x^2 + \frac{\sqrt{3}}{2} y^2 + x - \frac{1}{2} y$ and $g(x,y) = \frac{1}{2} x^2 - \frac{1}{2} y^2 + \sqrt{3} x - \frac{\sqrt{3}}{2} y$.}
\label{fig:ex_main}
\end{figure}

\end{example}

\section{Algorithm for Checking Convexity of $\mathbf{R}(f,g)$} \label{sec:algorithm}
This section is devoted to a polynomial-time procedure for checking the convexity of $\mathbf{R}(f,g)$. If both $A$ and $B$ are zero, $\mathbf{R}(f,g)$ is the range of $\mathbb{R}^n$ under an affine transformation, which is convex. In the following, we assume that $A \neq 0.$ Then, Theorem \ref{thm:noncvx_sep} asserts that the joint range $\mathbf{R}(f,g)$ is non-convex if and only if there exists $\alpha, \beta \in \mathbb{R}$ such that 
\begin{equation} \label{eq:levels-sep}
\{g - \beta = 0\} \text{ separates } \{f - \alpha = 0\}.
\end{equation}
Lemma \ref{lem:G=kF+H} further ensures that (\ref{eq:levels-sep}) can be always reduced, by suitable linear combinations of $f$ and $g$, to the case that an affine level set separates a quadratic one. Specifically, (\ref{eq:levels-sep})
holds if and only if the quadratic matrices of $g-\beta$ and $f-\alpha$ are linearly dependent, namely $B = \lambda A$, and the affine level set $\{-\lambda (f - \alpha) + (g-\beta) = 0\}$ separates the quadratic level set $\{f - \alpha = 0\}$. Thus, we obtain the following corollary of Theorem \ref{thm:noncvx_sep}.

\begin{corollary} \label{cor:noncvx_linear_sep}
Suppose that $A \neq 0$. The joint numerical range $\mathbf{R}(f,g)$ is non-convex if and only if $B = \lambda A$ for some $\lambda \in \mathbb{R}$ and $\{-\lambda f + g = \gamma \}$ separates $\{f=\alpha\}$ for some $\alpha, \gamma \in \mathbb{R}$.
\end{corollary}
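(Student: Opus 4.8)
The plan is to derive Corollary~\ref{cor:noncvx_linear_sep} as a direct specialization of Theorem~\ref{thm:noncvx_sep} by chaining it with Lemma~\ref{lem:G=kF+H} and the shift-invariance built into Proposition~\ref{prop:linear_comb_preserve_separation}. First I would invoke Theorem~\ref{thm:noncvx_sep}: $\mathbf{R}(f,g)$ is non-convex if and only if there exist $\alpha,\beta\in\mathbb{R}$ such that $\{g=\beta\}$ separates $\{f=\alpha\}$ or $\{f=\alpha\}$ separates $\{g=\beta\}$. The goal is to convert either separation statement about arbitrary \emph{level sets} into the asserted statement, where one side is forced to be \emph{affine} via the coefficient condition $B=\lambda A$. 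The natural bridge is Lemma~\ref{lem:G=kF+H}, which says $\{g=0\}$ separates $\{f=0\}$ if and only if the quadratic matrices are proportional and an affine combination separates $\{f=0\}$.

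The key technical point is that Lemma~\ref{lem:G=kF+H} is stated for $0$-level sets, so I would first reduce the level-set statement $\{g=\beta\}$ separates $\{f=\alpha\}$ to the $0$-level-set statement for the shifted functions $\tilde f:=f-\alpha$ and $\tilde g:=g-\beta$. Since $\{f=\alpha\}=\{\tilde f=0\}$ and $\{g=\beta\}=\{\tilde g=0\}$, and since the quadratic (Hessian) matrices of $\tilde f$ and $\tilde g$ coincide with those of $f$ and $g$ respectively, applying Lemma~\ref{lem:G=kF+H} to $(\tilde f,\tilde g)$ yields: there exists $\lambda\in\mathbb{R}$ with $B=\lambda A$ such that $\{-\lambda\tilde f+\tilde g=0\}$ separates $\{\tilde f=0\}$. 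Unwinding the shift, $-\lambda\tilde f+\tilde g=-\lambda f+g-(\beta-\lambda\alpha)$, so setting $\gamma:=\beta-\lambda\alpha$ gives exactly $\{-\lambda f+g=\gamma\}$ separates $\{f=\alpha\}$, which is the claimed form. The converse direction is immediate: if $B=\lambda A$ and $\{-\lambda f+g=\gamma\}$ separates $\{f=\alpha\}$, then by Proposition~\ref{prop:two_sides_linear_comb} (or directly via Proposition~\ref{prop:linear_comb_preserve_separation} applied to recover $\{g=\beta\}$ from $\{-\lambda f+g=\gamma\}$) one of the level-set separations of Theorem~\ref{thm:noncvx_sep} holds, so $\mathbf{R}(f,g)$ is non-convex.

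The one subtlety I would watch carefully is the symmetry of Theorem~\ref{thm:noncvx_sep}: its conclusion is a disjunction allowing either $\{g=\beta\}$ to separate $\{f=\alpha\}$ or the reverse. If it is $\{f=\alpha\}$ that separates $\{g=\beta\}$, I would apply Lemma~\ref{lem:G=kF+H} with the roles of $f$ and $g$ swapped, obtaining $A=\lambda' B$ for some $\lambda'$ and an affine combination separating $\{g=\beta\}$. Because we assume $A\neq0$, the proportionality $A=\lambda' B$ forces $\lambda'\neq0$ and hence $B=\tfrac{1}{\lambda'}A$, bringing us back to $B=\lambda A$ with $\lambda=1/\lambda'$; one then rewrites the separating affine combination in the form $-\lambda f+g-\gamma$ using Proposition~\ref{prop:linear_comb_preserve_separation} to rescale.

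The main obstacle I anticipate is purely bookkeeping rather than conceptual: keeping the shift constants $(\alpha,\beta)$, the proportionality constant $\lambda$, and the affine offset $\gamma$ consistent through the swap and the rescaling, and confirming that the special role of the hypothesis $A\neq0$ is exactly what rules out the degenerate case where both $A$ and $B$ vanish (already handled before the corollary) and guarantees $B=\lambda A$ rather than the vacuous $0=\lambda\cdot 0$. Since every nontrivial step is licensed by a lemma or proposition already proved in the excerpt, the proof is short, and no genuinely new estimate is required.
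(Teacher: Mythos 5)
Your main line is exactly the paper's argument: since the paper's Lemma~\ref{lem:G=kF+H} concerns $0$-level sets, you shift to $\tilde f=f-\alpha$, $\tilde g=g-\beta$ (Hessians unchanged), apply the lemma to get $B=\lambda A$ and $\{-\lambda\tilde f+\tilde g=0\}$ separating $\{\tilde f=0\}$, and read off $\gamma=\beta-\lambda\alpha$; your converse via Proposition~\ref{prop:linear_comb_preserve_separation} (taking $\eta=\lambda$, $\theta=1$ to recover $\{g=\beta\}$ as separator) plus the sufficiency half of Theorem~\ref{thm:noncvx_sep} is also sound. However, your handling of the swapped case fails as written. When $\{f=\alpha\}$ separates $\{g=\beta\}$, Lemma~\ref{lem:G=kF+H} with roles exchanged gives you $A=\lambda' B$ (with $\lambda'\neq0$ since $A\neq0$, as you note) and an affine level set separating $\{g=\beta\}$ --- but the corollary needs a set separating $\{f=\alpha\}$. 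You propose to convert one into the other ``using Proposition~\ref{prop:linear_comb_preserve_separation} to rescale,'' and that proposition cannot do this: it never changes which set is being separated. It only recombines the \emph{separating} function (any $\eta,\theta$ with $\theta\neq0$) and rescales the \emph{separated} one (by $\sigma\neq0$); every separation it produces from your data still has $\{g=\beta\}$ (or a nonzero multiple of $\tilde g$) on the separated side, and the choice $\eta=\lambda'$, $\theta=1$ merely returns the statement ``$\{f=\alpha\}$ separates $\{g=\beta\}$'' you started from.

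The missing ingredient is already in the paper: Lemma~\ref{lem:mutual-sep}. Since $A\neq0$, $f$ is genuinely quadratic, and an affine level set cannot be separated (remark (b) after Definition~\ref{def-separation}), so if $\{f=\alpha\}$ separates $\{g=\beta\}$ then $g$ must be quadratic too ($B\neq0$), whence mutual separation gives $\{g=\beta\}$ separates $\{f=\alpha\}$, collapsing the disjunction to the single case your main argument handles. This is effectively how the paper proceeds: because $A\neq0$, the refined statements (a) and (b) of Theorem~\ref{thm:noncvx_sep} already say non-convexity holds iff $\{g-\beta=0\}$ separates $\{f-\alpha=0\}$ (the other direction of separation never needs to be carried along), after which Lemma~\ref{lem:G=kF+H} on the shifted pair finishes. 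With that one-line patch --- invoking Lemma~\ref{lem:mutual-sep} (or Theorem~\ref{thm:noncvx_sep}(a)--(b)) instead of rescaling --- your proof coincides with the paper's.
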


We observe that there are three constants in Corollary \ref{cor:noncvx_linear_sep}, $\lambda,~\gamma,~\alpha,$ the existence of which
are to be determined. The condition ``$B = \lambda A$ for some $\lambda \in \mathbb{R}$'' is easy to verify. If $B \not= \lambda A,~\forall\lambda \in \mathbb{R},$ $\mathbf{R}(f,g)$ is convex. Otherwise, $\mathbf{R}(f,g)$ can be non-convex only when there exists some $\gamma,~\alpha$ such that the hyperplane $\{-\lambda f + g = \gamma \}$ separates $\{f=\alpha\}$. By Lemma \ref{lem:chckable}, such a pair of $\gamma, \alpha$, if exist, must satisfy the following: either $\left( \bar{f}_{\alpha}, \bar{A}, \bar{a} \right) = \left( f - \alpha, A, a \right)$ or $\left( \bar{f}_{\alpha}, \bar{A}, \bar{a} \right) = \left( -\left( f - \alpha \right), -A, -a \right)$ satisfies
\begin{enumerate}[(i)]
\setlength{\itemsep}{4pt}
\item $\bar{A}$ has exactly one negative eigenvalue, $\bar{a} \in \mathcal{R}(\bar{A})$
\item $c = -\lambda a + b \in \mathcal{R}(\bar{A})$, $c \neq 0$
\item $V^T \bar{A} V \succeq 0$, $\bar{w}_{\gamma} \in \mathcal{R}(V^T \bar{A} V )$, and $\bar{f}_{\alpha}(x_{\gamma})- \bar{w}_{\gamma}^T ( V^T \bar{A} V^T )^{\dagger} \bar{w}_{\gamma} > 0$,
\end{enumerate}
where $\bar{w}_{\gamma} = V^T(\bar{A}x_{\gamma}+\bar{a})$, $x_{\gamma} = \frac{-(-\lambda a_0 + b_0 - \gamma)}{c^T c} c$, and $V \in \mathbb{R}^{n \times (n-1)}$ is the matrix basis for $\mathcal{N}(c^T)$.

However, among (i)-(iii), we find that $\alpha,~\gamma$ appear only in
\begin{align}
&\bar{w}_{\gamma} \in \mathcal{R}(V^T \bar{A} V),~\bar{w}_{\gamma} = V^T(\bar{A}x_{\gamma}+\bar{a}),~x_{\gamma} = \frac{-(-\lambda a_0 + b_0 - \gamma)}{c^T c} c \label{eq:constant-dependent-gamma} \\
&\bar{f}_{\alpha}(x_{\gamma}) - \bar{w}_{\gamma} ^T (V^T A V^T)^{\dagger} \bar{w}_{\gamma} > 0. \label{eq:constant-dependent-alpha-gamma}
\end{align}
where $\left( \bar{f}_{\alpha}, \bar{A}, \bar{a} \right) = \left( f - \alpha, A, a \right)$ or $\left( \bar{f}_{\alpha}, \bar{A}, \bar{a} \right) = \left( -\left( f - \alpha \right), -A, -a \right)$.
Moreover, we observe that (\ref{eq:constant-dependent-alpha-gamma}) depends on (\ref{eq:constant-dependent-gamma}). As long as there exists $\gamma$ such that (\ref{eq:constant-dependent-gamma}) holds, one can choose $\alpha$ to be small enough (when $\bar{f}_{\alpha} = f - \alpha$) or large enough (when $\bar{f}_{\alpha} = -\left( f - \alpha \right)$) so that (\ref{eq:constant-dependent-alpha-gamma}) follows immediately. In the next lemma, we show that the existence of $\gamma$ satisfying (\ref{eq:constant-dependent-gamma}) can be guaranteed by ``$\bar{a} \in \mathcal{R}(\bar{A})$'' in (i), and hence the problem for the existence of $\alpha,~\gamma$ can be reduced to checking the following conditions (B1)-(B3).

\begin{lemma} \label{lem:liner_sep_checkable}
Let $h(x) = c^T x + c_0$ be an affine function and $f(x) = x^T A x + 2a^T x + a_0$ be a quadratic function. The following statements are equivalent:
\begin{enumerate}[(a)]
\setlength{\itemsep}{6pt}
\item The level set $\{h=\gamma\}$ separates the level set $\{f=\alpha\}$ for some $\alpha, \gamma \in \mathbb{R}$.
\item The matrix $\bar{A} = A$ or $\bar{A} = -A$ satisfies the following three conditions: \vskip4pt
\begin{enumerate}
\setlength{\itemsep}{0.1cm}
\item[(B1)] $\bar{A}$ has exactly one negative eigenvalue, $a \in \mathcal{R}(A)$
\item[(B2)] $c \in \mathcal{R}(A)$, $c \neq 0$
\item[(B3)] $V^T \bar{A} V \succeq 0$
\end{enumerate}
\vskip4pt
where $V \in \mathbb{R}^{n \times (n-1)}$ is a matrix basis of $\mathcal{N}(c^T)$.
\end{enumerate}
\end{lemma}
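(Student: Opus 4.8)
The plan is to derive Lemma~\ref{lem:liner_sep_checkable} as a reduction of Lemma~\ref{lem:chckable}. Observe first that ``$\{h=\gamma\}$ separates $\{f=\alpha\}$'' is exactly ``$\{h-\gamma=0\}$ separates $\{f-\alpha=0\}$'', so statement (a) holds if and only if there exist $\alpha,\gamma\in\mathbb{R}$ and a consistent sign choice $(\bar f_\alpha,\bar A,\bar a)=(f-\alpha,A,a)$ or $(-(f-\alpha),-A,-a)$ satisfying conditions (i)--(iii) of Lemma~\ref{lem:chckable}, now applied to $h-\gamma$ (constant term $c_0-\gamma$) and $f-\alpha$. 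The key structural observation is that only the vector
$$\bar w_\gamma=V^T(\bar A x_\gamma+\bar a),\qquad x_\gamma=\tfrac{\gamma-c_0}{c^Tc}\,c,$$
and the final strict inequality in (iii) depend on the heights $\alpha,\gamma$; the remaining requirements are precisely (B1) ($\bar A$ has one negative eigenvalue and $a\in\mathcal{R}(A)$, using $\mathcal{R}(\bar A)=\mathcal{R}(A)$), (B2) ($c\in\mathcal{R}(A)$, $c\neq0$), and the semidefiniteness $V^T\bar A V\succeq0$, which is (B3). This makes the implication (a)$\Rightarrow$(b) immediate: any $\alpha,\gamma$ witnessing (a) force (i)--(iii), and discarding the $\alpha,\gamma$-dependent clauses leaves exactly (B1)--(B3) under the same sign.

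The substance of the lemma is the converse (b)$\Rightarrow$(a): assuming (B1)--(B3) for a fixed sign $\bar A=\pm A$, $\bar a=\pm a$, I must manufacture heights $\alpha,\gamma$ making the two discarded clauses of (iii), namely $\bar w_\gamma\in\mathcal{R}(V^T\bar A V)$ and $\bar f_\alpha(x_\gamma)-\bar w_\gamma^T(V^T\bar A V)^\dagger\bar w_\gamma>0$, hold as well. I would fix $\gamma$ first and $\alpha$ second, since $\bar w_\gamma$ depends on $\gamma$ only. For $\gamma$: because (B1) gives $\bar a\in\mathcal{R}(\bar A)$, write $\bar a=\bar A z$ and decompose $z=s\,c+Vy$ along the direct sum $\mathbb{R}^n=\operatorname{span}(c)\oplus\mathcal{N}(c^T)$ (legitimate since $c\neq0$ and the columns of $V$ span $\mathcal{N}(c^T)$). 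Choosing $\gamma=c_0-s\,c^Tc$ forces $x_\gamma=-s\,c$, whence
$$\bar A x_\gamma+\bar a=-s\bar A c+\bar A(s c+Vy)=\bar A V y,\qquad \bar w_\gamma=V^T\bar A V\,y\in\mathcal{R}(V^T\bar A V),$$
so the first discarded clause is automatic. With $\gamma$, and hence $\bar w_\gamma$ and the scalar $\bar w_\gamma^T(V^T\bar A V)^\dagger\bar w_\gamma$, now fixed, the remaining inequality reads $\bar f_\alpha(x_\gamma)>\mathrm{const}$; since $\bar f_\alpha(x_\gamma)=\pm(f(x_\gamma)-\alpha)$ is affine in $\alpha$ with slope $\mp1\neq0$, I take $\alpha$ small enough when $\bar A=A$ or large enough when $\bar A=-A$ to secure it. All of (i)--(iii) then hold for $(h-\gamma,\,f-\alpha)$, and Lemma~\ref{lem:chckable} yields (a).

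I expect the $\gamma$-step to be the only real obstacle, i.e.\ showing that $a\in\mathcal{R}(A)$ alone guarantees a height making $\bar w_\gamma\in\mathcal{R}(V^T\bar A V)$; the rest is bookkeeping. Its crux is the cancellation $-s\bar A c+\bar A(sc+Vy)=\bar A Vy$, which rests on two facts I would state carefully: that $c\neq0$ makes $\operatorname{span}(c)\oplus\mathcal{N}(c^T)=\mathbb{R}^n$, so the splitting $z=sc+Vy$ exists, and that $x_\gamma$ sweeps the entire line $\operatorname{span}(c)$ as $\gamma$ ranges over $\mathbb{R}$ (indeed $c^Tx_\gamma+c_0=\gamma$, so $x_\gamma$ is the unique point of $\{h=\gamma\}$ on $\operatorname{span}(c)$), which is exactly what lets me realize $x_\gamma=-sc$. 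I would also keep the sign of $\bar A$ consistent across (B1)--(B3) and across the constructed (i)--(iii), matching the ``either/or'' structure inherited from Lemma~\ref{lem:chckable}.
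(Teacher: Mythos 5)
Your proposal is correct and follows essentially the same route as the paper's proof: both reduce to Lemma~\ref{lem:chckable} applied to $h-\gamma$ and $f-\alpha$, isolate the two $\gamma$- and $\alpha$-dependent clauses of condition (iii), construct $\gamma$ by writing a preimage of $\bar a$ under $\bar A$ in the splitting $\operatorname{span}(c)\oplus\mathcal{N}(c^T)$ so that $\bar w_\gamma = V^T\bar A V y \in \mathcal{R}(V^T\bar A V)$, and then take $\alpha$ sufficiently small or large according to the sign of $\bar A$. Your decomposition $z = sc + Vy$ with $\gamma = c_0 - s\,c^Tc$ is the paper's decomposition $u_0 = Vy + \frac{\gamma_0}{c^Tc}c$ with $\gamma = c_0 - \gamma_0$ up to a rescaling of the scalar coordinate, so the arguments coincide.
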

\begin{proof}
As we say above, according to Lemma \ref{lem:chckable}, the level set $\{h=\gamma\}$ separates $\{f=\alpha\}$ for some $\alpha, \gamma \in \mathbb{R}$ if and only if there exist $\gamma, \alpha$ satisfying conditions (i)-(iii) mentioned above for either $\left( \bar{f}_{\alpha}, \bar{A}, \bar{a} \right) = \left( f - \alpha, A, a \right)$ or $\left( \bar{f}_{\alpha}, \bar{A}, \bar{a} \right) = \left( -\left( f - \alpha \right), -A, -a \right)$.

In either cases, one has
\begin{align}
\bar{a} \in \mathcal{R}(\bar{A}) ~ &\Leftrightarrow ~
a \in \mathcal{R}(A) \label{eq:remove-bar-1} \\
c \in \mathcal{R}(\bar{A}) ~ &\Leftrightarrow ~
c \in \mathcal{R}(A) \label{eq:remove-bar-2}
\end{align}
Therefore, when $\left( \bar{f}_{\alpha}, \bar{A}, \bar{a} \right)$ satisfies (i)-(iii), the same $\bar{A}$ must satisfy conditions (B1)-(B3). Hence, statement (a) directly implies statement (b).

To prove the converse, we take $\bar{a}=a$, $\bar{f}_{\alpha} = f - \alpha$ if $\bar{A} = A$ and take $\bar{a} = -a$, $\bar{f}_{\alpha} = - \left( f - \alpha \right)$ if $\bar{A} = -A$, where $\alpha$ is a constant which will be determined later. With this choice of the triple $(\bar{f}_{\alpha}, \bar{A}, \bar{a})$, the equivalences (\ref{eq:remove-bar-1}) and (\ref{eq:remove-bar-2}) together imply that (i) and (ii) hold true. Finally, it suffices to show the existence of $\alpha, \gamma$ satisfying (iii). We have mentioned that the existence of $\alpha$ depends on the existence of $\gamma$, and then it suffices to show that ``$\bar{a} \in \mathcal{R}(\bar{A})$'' guarantees the existence of $\gamma$ satisfying
\begin{equation} \label{eq:gamma-cond}
\bar{w}_{\gamma} \in \mathcal{R}(V^T \bar{A} V),~\bar{w}_{\gamma} = V^T(\bar{A}x_{\gamma}+\bar{a}),~x_{\gamma} = \frac{-(c_0 - \gamma)}{c^T c} c.
\end{equation}
Since $\bar{a} \in \mathcal{R}(\bar{A})$, there exists some $u_0 \in \mathbb{R}^n$ such that
\begin{equation} \label{eq:V^Ta-in-V^TA}
V^T \bar{a} = V^T \bar{A} u_0.
\end{equation}
As $V \in \mathbb{R}^{n \times (n-1)}$ is a matrix basis for $\mathcal{N}(c^T)$, the $n \times n$ matrix $\left({V}, \frac{1}{{c}^{T} {c}} {c}\right)$ is of full rank. Thus, for $u_0$ in (\ref{eq:V^Ta-in-V^TA}), there exists $y \in \mathbb{R}^{n-1}$ and $\gamma_0 \in \mathbb{R}$ such that
\begin{equation} \label{eq:u0}
u_0 =
\left(V, \frac{1}{c^{T} c} c\right)
\left(\begin{array}{c}
y \\
\gamma_0
\end{array}\right) = V y + \frac{\gamma_0}{c^T c} c.
\end{equation}
Take $\gamma = c_0 - \gamma_0$. Equations (\ref{eq:V^Ta-in-V^TA}) and (\ref{eq:u0}) thus imply
\begin{equation*}
\begin{split}
V^T \bar{a} &= V^T \bar{A} V y + \frac{\gamma_0}{c^T c} V^T \bar{A} c \\
&= V^T \bar{A} V y + \frac{(c_0 - \gamma)}{c^T c} V^T \bar{A} c \\
&= V^T \bar{A} V y - V^T \bar{A} x_{\gamma}
\end{split}
\end{equation*}
Hence, we obtain
$$
V^T \bar{A} V y = V^T \bar{a} + V^T \bar{A} x_{\gamma} = \bar{w}_{\gamma},
$$
which shows that such taken $\gamma$ satisfies (\ref{eq:gamma-cond}), and hence the existence of $\alpha$ follows. The proof is therefore completed.
\end{proof}

Combining Corollary \ref{cor:noncvx_linear_sep} with Lemma \ref{lem:liner_sep_checkable} all together, we see that, if $A \neq 0$, the joint range $\mathbf{R}(f,g)$ is non-convex when and only when the following two parts are satisfied:
\begin{itemize}
\item[-] $A$ and $B$ are linearly dependent, say $B = \lambda A$;
\item[-] two constants $\alpha,~\gamma$ can be chosen so that the hyperplane $\{-\lambda f + g = \gamma \}$ separates the quadratic hypersurface $\{f = \alpha\},$ which can be checked by (B1)-(B3),
\end{itemize}
We write this into the following main theorem, which can be converted into a numerical procedure for checking the convexity of $\mathbf{R}(f,g)$.

\begin{theorem} \label{thm:checkable_cond}
Given two quadratic functions $f(x) = x^T A x + 2 a^T x + a_0$ and $g(x) = x^T B x + 2 b^T x + b_0$ with $A \neq 0$. The joint numerical range $\mathbf{R}(f,g)$ is non-convex if and only if the matrix $\bar{A} = A$ or $\bar{A} = -A$ satisfies the following four:
\begin{enumerate}
\setlength{\itemsep}{0.1cm}
\item[(B0)] $B = \lambda A$ for some $\lambda \in \mathbb{R}$
\item[(B1)] $\bar{A}$ has exactly one negative eigenvalue, $a \in \mathcal{R}(A)$
\item[(B2)] $ -\lambda a + b \in \mathcal{R}(A)$, $-\lambda a + b \neq 0$
\item[(B3)] $V^T \bar{A} V \succeq 0$
\end{enumerate}
where $V \in \mathbb{R}^{n \times (n-1)}$ is a matrix basis of $\mathcal{N}((-\lambda a + b)^T)$.
\end{theorem}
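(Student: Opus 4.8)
The plan is to obtain Theorem \ref{thm:checkable_cond} purely as a synthesis of the two results immediately preceding it, Corollary \ref{cor:noncvx_linear_sep} and Lemma \ref{lem:liner_sep_checkable}; essentially no new analysis is required, only a careful identification of the data appearing in each. First I would invoke Corollary \ref{cor:noncvx_linear_sep}, which (since $A \neq 0$) states that $\mathbf{R}(f,g)$ is non-convex if and only if there is some $\lambda \in \mathbb{R}$ with $B = \lambda A$---this is exactly (B0)---together with real numbers $\alpha, \gamma$ for which the level set $\{-\lambda f + g = \gamma\}$ separates $\{f = \alpha\}$. Thus the task reduces to recasting the latter separation condition in checkable algebraic terms.

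The key observation is that, once (B0) holds, the function $-\lambda f + g$ is genuinely affine. Indeed,
\begin{equation*}
(-\lambda f + g)(x) = x^T (B - \lambda A) x + 2(b - \lambda a)^T x + (b_0 - \lambda a_0) = 2(-\lambda a + b)^T x + (b_0 - \lambda a_0),
\end{equation*}
so that, writing $h(x) = c^T x + c_0$, its linear part is $c = 2(-\lambda a + b)$ and its constant is $c_0 = b_0 - \lambda a_0$. I would then apply Lemma \ref{lem:liner_sep_checkable} to this affine $h$ and the quadratic $f$: that lemma asserts precisely that $\{h = \gamma\}$ separates $\{f = \alpha\}$ for some $\alpha, \gamma$ if and only if $\bar{A} = A$ or $\bar{A} = -A$ satisfies conditions (B1)--(B3). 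Chaining this equivalence with the reduction from the previous paragraph, and noting that both Corollary \ref{cor:noncvx_linear_sep} and Lemma \ref{lem:liner_sep_checkable} are biconditionals so that their composition is again a biconditional, yields the stated theorem in both directions at once.

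The one point requiring care---and the only place a careless reading might slip---is the matching of the linear data $c$ between the lemma and the theorem. Since $\mathcal{R}(A)$ is a subspace and $c = 2(-\lambda a + b)$ differs from $-\lambda a + b$ only by the nonzero scalar $2$, we have $c \in \mathcal{R}(A) \iff (-\lambda a + b) \in \mathcal{R}(A)$ and $c \neq 0 \iff -\lambda a + b \neq 0$; hence condition (B2) of the lemma is equivalent to (B2) as written in the theorem. For the same reason $\mathcal{N}(c^T) = \mathcal{N}((-\lambda a + b)^T)$, so the matrix basis $V$ appearing in (B3) of the lemma may be taken to be the matrix basis of $\mathcal{N}((-\lambda a + b)^T)$ named in the theorem, and (B3) transfers verbatim. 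I do not anticipate a genuine obstacle: all the substantive work---the reduction to an affine-versus-quadratic separation in Corollary \ref{cor:noncvx_linear_sep}, and the algebraic characterization of such a separation in Lemma \ref{lem:liner_sep_checkable}---has already been carried out, so the proof is a short gluing argument whose only content is this scalar-factor bookkeeping.
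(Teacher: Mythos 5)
Your proposal is correct and follows essentially the same route as the paper, which obtains Theorem \ref{thm:checkable_cond} precisely by combining Corollary \ref{cor:noncvx_linear_sep} (reducing non-convexity, given $A \neq 0$, to $B = \lambda A$ plus an affine level set $\{-\lambda f + g = \gamma\}$ separating $\{f = \alpha\}$) with Lemma \ref{lem:liner_sep_checkable} (characterizing that separation by (B1)--(B3)). Your extra care about the scalar factor $2$ in $c = 2(-\lambda a + b)$ versus $-\lambda a + b$ is sound---range membership, non-vanishing, and $\mathcal{N}(c^T)$ are all invariant under nonzero scaling---and is a detail the paper silently absorbs by setting $c = -\lambda a + b$ directly.
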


The procedure is described below by a few steps. Notice that each step can be implemented in polynomial time. An implementable pseudo-code is also provided in Algorithm \ref{algorithm} in the next page.


\begin{description}
\setlength{\itemsep}{6pt}
\item[\textbf{Given }] The coefficient matrices $A$, $B$, $a$, and $b$. \vskip0.2cm
\item[\textbf{Step 0}] Check whether $A = B = 0$ or $a = b = 0$.
\vspace{2pt}
\begin{itemize}
\setlength{\itemsep}{2pt}
\item[-] If true, then $\mathbf{R}(f,g)$ is convex.
\item[-] If false, go to next step. (Without loss of generality, we assume $A \neq 0$ in the following.)
\end{itemize}
\item[\textbf{Step 1}] Check whether $B = \lambda A$ for some $\lambda$ in $\mathbb{R}$.
\vspace{2pt}
\begin{itemize}
\setlength{\itemsep}{2pt}
\item[-] If true, set $c = -\lambda a + b$ and go to next step.
\item[-] If false, then $\mathbf{R}(f,g)$ is convex.
\end{itemize}
\item[\textbf{Step 2}] Check whether $c \neq 0$ and whether both the linear systems $Ay_1=a$ and $Ay_2=c$ have solutions.
\vspace{2pt}
\begin{itemize}
\setlength{\itemsep}{2pt}
\item[-] If true, set $V \in \mathbb{R}^{n \times (n-1)}$ to be the matrix basis of $\mathcal{N}(c^T)$ and go to next step.
\item[-] If false, then $\mathbf{R}(f,g)$ is convex.
\end{itemize}
\item[\textbf{Step 3}] Check whether one of the following cases happens:
\begin{align*}
\textrm{(a)} ~ &A \text{ has exactly one negative eigenvalue and } V^T A V \succeq 0 \\
\textrm{(b)} ~ &A \text{ has exactly one positive eigenvalue and } V^T A V \preceq 0
\end{align*}
\begin{itemize}
\setlength{\itemsep}{2pt}
\item[-] If one of (a) and (b) holds, then $\mathbf{R}(f,g)$ is non-convex.
\item[-] Otherwise, $\mathbf{R}(f,g)$ is convex.
\end{itemize}
\end{description}

\begin{algorithm}
\SetKwInOut{Input}{input}\SetKwInOut{Output}{output}

\Input{The coefficient matrices $A$, $B$, $a$, $b$}
\Output{Convexity or non-convexity of $\mathbf{R}(f,g)$}
\BlankLine
\BlankLine

\uIf{$A = B = 0~$ {\rm \bf or} $~a = b = 0$}{
\Return{convex} \;
}
\ElseIf{$A = 0~$ {\rm \bf and} $~B \neq 0$}{
$A \leftarrow B$ \tcc*[c]{Always set $A$ to be the non-zero matrix}
$B \leftarrow 0$ \;
}
\BlankLine

\If{$B = \lambda A$ for some $\lambda \in \mathbb{R}$}{
$c \leftarrow -\lambda a + b$\;
\If{$c \neq 0$}{
	\If{Both linear systems $Ay_1=a$ and $Ay_2=c$ have solutions}
	{
		$V \leftarrow$ matrix basis for null space of $c^T$
		\tcc*[c]{$V$ is an $n \times (n-1)$ matrix}
		\uIf{$A$ has exactly one negative eigenvalue {\rm \bf and} $V^T A V \succeq 0$}{
			\Return{non-convex}\; 		
		}
		\ElseIf{$A$ has exactly one positive eigenvalue {\rm \bf and} $V^T A V \preceq 0$}{
			\Return{non-convex}\;
		}
	}
}
}

\BlankLine
\Return{convex}\;
\BlankLine
\BlankLine
\caption{Algorithm for checking the convexity of $\mathbf{R}(f,g)$}
\label{algorithm}
\end{algorithm}

\begin{example} \label{ex:checkable_computation}
Let $f(x) = x^T A x + 2a^T x$ and $g(x) = x^T B x + 2b^T x$ be two quadratic functions on $\mathbb{R}^n$ with coefficient matrices
\begin{equation*}
\begin{split}
&A =
\left[
\begin{array}{c c c c}
0 & 0 & -1 & 0 \\
0 & \frac{1}{2} & 0 & \frac{1}{2} \\
-1 & 0 & 0 & 0 \\
0 & \frac{1}{2} & 0 & \frac{1}{2}
\end{array}
\right] \ , \
B =
\left[
\begin{array}{c c c c}
 0 & 0 & -2 & 0 \\
 0 & 1 & 0  & 1 \\
-2 & 0 & 0  & 0 \\
 0 & 1 & 0  & 1
\end{array}
\right] \\
&2a = \left( 2, \sqrt{2}, 2, \sqrt{2} \right)^T , 2b = \left( 5, \frac{3}{\sqrt{2}}, 8, \frac{3}{\sqrt{2}} \right)^T
\end{split}
\end{equation*}
\begin{description}
\setlength{\itemsep}{4pt}
\item[\textbf{Step 0}] Neither $A = B = 0$ nor $a = b = 0$.
\item[\textbf{Step 1}] Observe that $B = \lambda A$ with $\lambda = 2$. Set
$$c^T = - \lambda a + b = \left( 1, \frac{-1}{\sqrt{2}}, 4, \frac{-1}{\sqrt{2}} \right)$$
\item[\textbf{Step 2}] One has $c \neq 0$. Both systems $Ay_1=a$ and $Ay_2=c$ have solutions:
$$
y_1 = \left[
\begin{array}{c}
-1 \\
\frac{1}{\sqrt{2}} \\
-1 \\
\frac{1}{\sqrt{2}}
\end{array}
\right] ~ \text{ and }~
y_2 = \left[
\begin{array}{c}
-2 \\
\frac{-1}{2\sqrt{2}} \\
\frac{-1}{2} \\
\frac{-1}{2\sqrt{2}}
\end{array}
\right]
$$
\item[\textbf{Step 3}] Four eigenvalues of $A$ are $\lambda_1 = -1,\ \lambda_2 = 0,\ \lambda_3 = \lambda_4 = 1.$
Thus, $A$ has exactly one negative eigenvalue. We may choose $V \in \mathbb{R}^{n \times (n-1)}$ be the matrix basis of $\mathcal{N}(c^T)$ such that
$$
V =
\left[
\begin{array}{c c c}
\frac{1}{\sqrt{2}} & -4 & \frac{1}{\sqrt{2}} \\
1 & 0 & 0 \\
0 & 1 & 0 \\
0 & 0 & 1
\end{array}
\right]
~ \text{ and } ~
V^T A V = \left[
\begin{array}{c c c}
 \frac{1}{2} & \frac{-1}{\sqrt{2}} & \frac{1}{2} \\
 \frac{-1}{\sqrt{2}} & 8 & \frac{-1}{\sqrt{2}} \\
 \frac{1}{2} & \frac{-1}{\sqrt{2}} & \frac{1}{2} \\
\end{array}
\right].
$$
The matrix $V^T A V$ has eigenvalues $\eta_1 = 0,\ \eta_2 = \frac{9-\sqrt{53}}{2},\ \eta_3 = \frac{9+\sqrt{53}}{2}$, which are all non-negative. Hence, $V^T A V$ is positive semidefinite. Therefore, we conclude that the joint range $\mathbf{R}(f,g)$ is non-convex.
\end{description}
The shaded region in Figure \ref{fig:ex-checkable_computation} is $\mathbf{R}(f,g)$ of this example, which justifies our procedure.

\begin{figure}[H]
\centering
\includegraphics[width=0.6\linewidth]{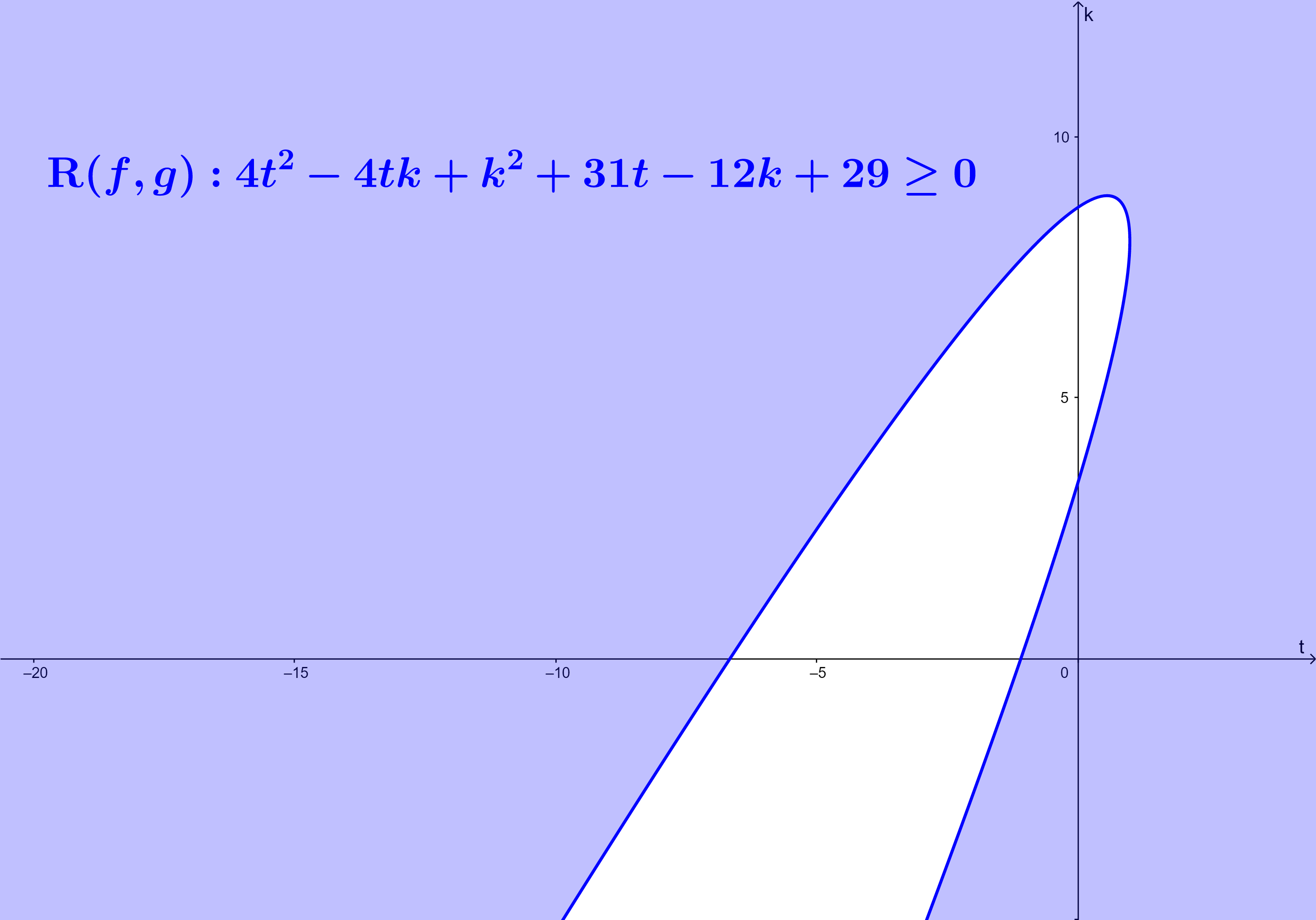}
\caption{The joint numerical range $\mathbf{R}(f,g)$ in Example \ref{ex:checkable_computation}.}
\label{fig:ex-checkable_computation}
\end{figure}
\end{example}

\section{Implications} \label{sec:equiv}

In this section, we show that the necessary and sufficient conditions for the convexity of $\mathbf{R}(f,g)$ developed by Flores-Bazán and Opazo \cite[2016]{Bazan-Opazo16} is a direct consequence of our Theorem \ref{thm:noncvx_sep}. For convenience, we list their result as Theorem \ref{thm:Chile} below.

\begin{theorem}[Flores-Bazán and Opazo {\cite[2016]{Bazan-Opazo16}}] \label{thm:Chile}
The joint numerical range $\mathbf{R}(f,g)$ is non-convex if and only if there exists some $d = (d_1, d_2) \in \mathbb{R}^2$, $d \neq 0$ such that the following four hold:
\begin{enumerate}
\setlength{\itemsep}{4pt}
\item[(C1)] $a,~b \in \left( \mathcal{N}(A) \cap \mathcal{N}(B) \right)^{\perp}$
\item[(C2)] $d_2 A = d_1 B$
\item[(C3)] $-d \in \mathbf{R}(f_H,g_H)$
\item[(C4)] $F_H(u) = -d \implies \left\langle F_L(u), d_{\perp}\right\rangle \neq 0$
\end{enumerate}
where $F_H(x)= \left( f_H(x) , g_H(x) \right) = \left( x^T A x , x^T B x \right)$, $F_L(x)=\left( a^T x , b^T x \right)$, and $d_{\perp} = (-d_2, d_1)$.
\end{theorem}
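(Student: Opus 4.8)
The plan is to prove Theorem~\ref{thm:Chile} by showing that the Flores-Bazán--Opazo conditions \textbf{(C1)}--\textbf{(C4)}, holding for some $d\neq 0$, are logically equivalent to the checkable conditions \textbf{(B0)}--\textbf{(B3)} of Theorem~\ref{thm:checkable_cond}. Since the latter already characterize the non-convexity of $\mathbf{R}(f,g)$ as a consequence of Theorem~\ref{thm:noncvx_sep}, this equivalence delivers Theorem~\ref{thm:Chile} immediately. First I would dispose of the degenerate cases: if $A=B=0$ then \textbf{(C3)} forces $-d\in\mathbf{R}(f_H,g_H)=\{0\}$, hence $d=0$, so no admissible $d$ exists and $\mathbf{R}(f,g)$ is correctly declared convex; and since swapping $f\leftrightarrow g$ reflects $\mathbf{R}(f,g)$ about the diagonal while swapping $(A,a,d_1)\leftrightarrow(B,b,d_2)$, I may assume without loss of generality that $A\neq 0$, matching the standing hypothesis of Theorem~\ref{thm:checkable_cond}.

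Next I would build the dictionary between the vector $d$ and the problem data. Under $A\neq 0$, condition \textbf{(C2)} $d_2A=d_1B$ forces $d_1\neq 0$ (else $d_2A=0$ with $d_2\neq 0$ gives $A=0$), so $B=\lambda A$ with $\lambda=d_2/d_1$, which is precisely \textbf{(B0)}; I record the sign correspondence $d_1>0\leftrightarrow\bar A=A$ and $d_1<0\leftrightarrow\bar A=-A$. Using $B=\lambda A$ and $d_2=\lambda d_1$, the second coordinate of \textbf{(C3)} becomes automatic, so \textbf{(C3)} reduces to the existence of $u$ with $u^TAu=-d_1$; similarly $-d_2a+d_1b=d_1(-\lambda a+b)=d_1c$ with $c=-\lambda a+b$, so \textbf{(C4)} reads: every $u$ satisfying $u^TAu=-d_1$ has $c^Tu\neq 0$.

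I would then convert these reduced statements into spectral form. Restricting $f_H$ to the hyperplane $\{c^Tx=0\}=\mathcal{R}(V)$ gives $u^TAu=z^T(V^TAV)z$ for $u=Vz$, so \textbf{(C4)} is equivalent to the solvability of $z^T(V^TAV)z=-d_1$ failing; for $d_1>0$ this is exactly $V^TAV\succeq 0$, i.e.\ \textbf{(B3)} with $\bar A=A$ (and $d_1<0$ yields $V^T(-A)V\succeq 0$). Meanwhile \textbf{(C3)} says $A$ attains the value $-d_1$, which for $d_1>0$ means $A$ has at least one negative eigenvalue; and if $A$ had two, it would be negative definite on a $2$-dimensional subspace $W$, which meets the codimension-one subspace $\mathcal{R}(V)$ in a nonzero vector, contradicting $V^TAV\succeq 0$. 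Hence $A$ has exactly one negative eigenvalue, yielding the spectral part of \textbf{(B1)}; the requirement $c\neq 0$ in \textbf{(B2)} also falls out, since $c=0$ would make $\langle F_L(u),d_\perp\rangle\equiv 0$ and violate \textbf{(C4)}. For the range conditions I would use that $A$ symmetric gives $\mathcal{R}(A)=\mathcal{N}(A)^\perp$ and that $B=\lambda A$ forces $(\mathcal{N}(A)\cap\mathcal{N}(B))^\perp=\mathcal{R}(A)$, so \textbf{(C1)} is exactly $a,b\in\mathcal{R}(A)$, equivalently $a\in\mathcal{R}(A)$ together with $c=-\lambda a+b\in\mathcal{R}(A)$, which are the range parts of \textbf{(B1)} and \textbf{(B2)}. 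Running every implication backwards — choosing $d=(1,\lambda)$ when $\bar A=A$ and $d=(-1,-\lambda)$ when $\bar A=-A$, and invoking \textbf{(B1)} to realize the level $-d_1$ required by \textbf{(C3)} — establishes the converse and completes the equivalence.

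The step I expect to be the main obstacle is the faithful translation of \textbf{(C4)}, a non-orthogonality condition on the homogeneous level set $\{u^TAu=-d_1\}$, into the semidefiniteness \textbf{(B3)}, together with the passage from ``at least one'' to ``exactly one'' negative eigenvalue in \textbf{(B1)}. This is where the codimension-one restriction of the quadratic form and the intersection/interlacing argument do the real work, and where the sign bookkeeping linking $d_1$ to the choice $\bar A=\pm A$ must be kept scrupulously consistent across both directions of the equivalence.
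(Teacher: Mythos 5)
Your proposal is correct, but it does something genuinely different from the paper: the paper never proves Theorem~\ref{thm:Chile} at all --- it is quoted as a known result of Flores-Baz\'an and Opazo, and the paper's Section~\ref{sec:equiv} only establishes one half of the bridge, namely Theorem~\ref{thm:implication}: separation (equivalently, via Theorem~\ref{thm:checkable_cond}, conditions (B0)--(B3)) implies (C1)--(C4) for $d_{\pm}=\pm(1,\lambda)$. Your forward direction essentially reproduces that argument, with the same certificate choice and the same verifications ($\mathcal{R}(A)=(\mathcal{N}(A)\cap\mathcal{N}(B))^{\perp}$ for (C1), realizing $u^TAu=-1$ for (C3), and $u=Vw\Rightarrow u^TAu\ge 0$ for (C4)). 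Your reverse direction --- (C1)--(C4) $\Rightarrow$ (B0)--(B3) --- is precisely the step the paper flags as ``perhaps non-trivial'' and omits, relying instead on the original 34-page proof in \cite{Bazan-Opazo16}. Your execution of it is sound: (C2) with $A\neq 0$ forces $d_1\neq 0$ and $B=\lambda A$ with $\lambda=d_2/d_1$; the identity $\left\langle F_L(u),d_{\perp}\right\rangle=d_1c^Tu$ reduces (C4) to a statement about $c^Tu$; the combination of (C3) and (C4) rules out $c=0$; unsolvability of $z^T(V^TAV)z=-d_1$ on the hyperplane correctly translates to $V^TAV\succeq 0$ (resp.\ $\preceq 0$) by the scaling argument; and the dimension count $2+(n-1)-n\ge 1$ upgrades ``at least one'' to ``exactly one'' negative (resp.\ positive) eigenvalue. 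The degenerate case $A=B=0$ and the swap symmetry are also handled correctly (note the sign flip $\left\langle F_L'(u),d'_{\perp}\right\rangle=-\left\langle F_L(u),d_{\perp}\right\rangle$ is harmless since the condition is non-vanishing). The only cosmetic nit is ordering: you should derive $c\neq 0$ before identifying $\{c^Tx=0\}$ with $\mathcal{R}(V)$, since $V$ is only an $n\times(n-1)$ basis matrix when $c\neq 0$. What each approach buys: the paper's route is shorter because it outsources the converse to \cite{Bazan-Opazo16}, whereas yours makes Theorem~\ref{thm:Chile} a self-contained corollary of Theorem~\ref{thm:checkable_cond} and, in doing so, answers the paper's open remark by recovering the separation characterization from (C1)--(C4).
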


As we can see, Theorem \ref{thm:Chile} verifies whether $\mathbf{R}(f,g)$ is non-convex by the existence of a certificate $d = (d_1, d_2)\not=0$ satisfying conditions (C1)-(C4), but Flores-Bazán and Opazo in \cite[2016]{Bazan-Opazo16} did not provide a procedure for providing such a certificate. In addition, even with a certificate $d = (d_1, d_2)\not=0$ on hand, conditions (C1)-(C4) reveal very little information as to what was going on behind the scenes.

Our approach reduces the non-convexity of $\mathbf{R}(f,g)$ to also checking the existence of two constants $\alpha,\beta$ such that $\{f=\alpha\}$ separates
$\{g=\beta\}$ or reversely. Then, from Theorem \ref{thm:checkable_cond}, there are four possibilities which could happen:
\begin{itemize}
  \item[($\sharp 1$)] $A$ has exactly one negative eigenvalue ($\{g=\beta\}$ separates $\{f=\alpha\}$);
  \item[($\sharp 2$)] $A$ has exactly one positive eigenvalue ($\{g=\beta\}$ separates $\{f=\alpha\}$);
  \item[($\flat 1$)] $B$ has exactly one negative eigenvalue ($\{f=\alpha\}$ separates $\{g=\beta\}$);
  \item[($\flat 2$)] $B$ has exactly one positive eigenvalue ($\{f=\alpha\}$ separates $\{g=\beta\}$).
\end{itemize}
In the following, we can show that, if $\mathbf{R}(f,g)$ is non-convex, condition (C3) in Theorem \ref{thm:Chile}
can be strengthened to conclude that there exists either a positive eigenvalue or a negative eigenvalue for the $A$ matrix or the $B$ matrix, while there is only one positive (or only one negative) eigenvalue can be derived from condition (C4). In other words, our Theorem \ref{thm:noncvx_sep}, or equivalently, Theorem \ref{thm:checkable_cond} provide more detail information than (C1)-(C4) did, and thus the latter can be put as a consequence of the former. Though we know the two are actually equivalent, yet coming back from (C1)-(C4) to conclude our separation property Theorem \ref{thm:noncvx_sep} is perhaps non-trivial.
%
%
%
%
%

\begin{theorem} \label{thm:implication}
Let $f(x) = x^T A x + 2a^T x + a_0$ and $g(x) = x^T B x + 2b^T x + b_0$ be two quadratic functions defined on $\mathbb{R}^n$ and $B=\lambda A$ for some $\lambda\in \mathbb{R}.$ If the level set $\{g=\beta\}$ separates $\{f=\alpha\}$ for some $\alpha, \beta \in \mathbb{R}$, then $d_{+} = (1, \lambda) \in \mathbb{R}^2$ or $d_{-} = (-1, -\lambda) \in \mathbb{R}^2$ satisfy Conditions (C1)-(C4) in Theorem \ref{thm:Chile}. 
\end{theorem}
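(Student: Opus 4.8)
The plan is to read off, from the separation hypothesis, the checkable data (B1)--(B3) of Lemma \ref{lem:liner_sep_checkable}, and then translate each of them into one of the four Flores-Baz\'an--Opazo conditions, with the sign of the certificate $d$ dictated by which of $\bar{A}=A$ or $\bar{A}=-A$ is the active case. The first step is to reduce the quadratic/quadratic separation to an affine/quadratic one: since $B=\lambda A$, the combination $h:=-\lambda f+g$ has vanishing quadratic part and is affine, with gradient proportional to $c:=-\lambda a+b$. Applying Lemma \ref{lem:G=kF+H} to the shifted functions $f-\alpha$ and $g-\beta$, the hypothesis ``$\{g=\beta\}$ separates $\{f=\alpha\}$'' becomes ``$\{h=\gamma\}$ separates $\{f=\alpha\}$'' for $\gamma=\beta-\lambda\alpha$, and Lemma \ref{lem:liner_sep_checkable} (as already assembled in Corollary \ref{cor:noncvx_linear_sep}) then guarantees that $\bar{A}=A$ or $\bar{A}=-A$ satisfies (B1)--(B3) with this $c$ and with $V$ a basis of $\mathcal{N}(c^T)$. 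A separated level set is disconnected, so $f$ is genuinely quadratic and $A\neq 0$. I would then fix the certificate accordingly: take $d=d_{+}=(1,\lambda)$ when $\bar{A}=A$ (so $A$ has exactly one negative eigenvalue and $V^T A V\succeq 0$), and $d=d_{-}=(-1,-\lambda)$ when $\bar{A}=-A$ (so $A$ has exactly one positive eigenvalue and $V^T A V\preceq 0$).

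Conditions (C1) and (C2) then follow by pure linear algebra. For (C2), the requirement $d_2 A=d_1 B$ reads $\lambda A=B$ for $d_{+}$ and, after negation, $\lambda A=B$ again for $d_{-}$, both immediate from $B=\lambda A$. For (C1), $B=\lambda A$ gives $\mathcal{N}(A)\subseteq\mathcal{N}(B)$, so $\mathcal{N}(A)\cap\mathcal{N}(B)=\mathcal{N}(A)$, and by symmetry of $A$ its orthogonal complement is $\mathcal{R}(A)$; then (B1) gives $a\in\mathcal{R}(A)$, (B2) gives $c\in\mathcal{R}(A)$, and hence $b=c+\lambda a\in\mathcal{R}(A)$, placing both $a$ and $b$ in $(\mathcal{N}(A)\cap\mathcal{N}(B))^{\perp}$.

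For (C3) and (C4) I would exploit the sign structure of $A$, treating the case $d=d_{+}$ in detail (the $d_{-}$ case being symmetric). Because $A$ has a negative eigenvalue there is an $x$ with $x^T A x=-1$, whence $x^T B x=\lambda x^T A x=-\lambda$ and $F_H(x)=(-1,-\lambda)=-d_{+}$, giving (C3). For (C4), since $B=\lambda A$ the equation $F_H(u)=-d_{+}$ is equivalent to the single scalar condition $u^T A u=-1$, and a direct computation yields $\langle F_L(u),d_{\perp}\rangle=(-\lambda a+b)^T u=c^T u$. If this vanished, then $u\in\mathcal{N}(c^T)$, so $u=Vz$ and (B3) would force $u^T A u=z^T(V^T A V)z\ge 0$, contradicting $u^T A u=-1<0$; hence $c^T u\neq 0$ and (C4) holds. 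The $d_{-}$ case merely replaces ``negative/$\succeq$/$-1$'' by ``positive/$\preceq$/$+1$'', with $\langle F_L(u),d_{\perp}\rangle=-c^T u$, whose vanishing is again equivalent to $c^T u=0$.

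The main obstacle is the bookkeeping in the reduction step: one must verify that passing through Lemma \ref{lem:G=kF+H} pins down exactly the same normal vector $c=-\lambda a+b$ and the same subspace $\mathcal{N}(c^T)$ that appear in (B1)--(B3), and that the dichotomy $\bar{A}=\pm A$ is threaded consistently to the two certificates $d_{\pm}$ across (C3)--(C4). Once the correct case is selected, each of (C1)--(C4) is a short consequence of a single ingredient of (B1)--(B3), with (C4) being the only genuinely geometric step and resting entirely on the semidefiniteness $V^T\bar{A}V\succeq 0$.
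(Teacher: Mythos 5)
Your proposal is correct and follows essentially the same route as the paper's own proof: reduce via Lemma \ref{lem:G=kF+H} to an affine level set $\{-\lambda f+g=\gamma\}$ separating $\{f=\alpha\}$, invoke Lemma \ref{lem:liner_sep_checkable} to obtain (B1)--(B3), verify (C1)--(C2) by the same linear algebra ($\mathcal{R}(A)=(\mathcal{N}(A)\cap\mathcal{N}(B))^{\perp}$, $b=c+\lambda a$), get (C3) from a vector with $u^TAu=\mp 1$, and get (C4) contrapositively from $\langle F_L(u),d_{\perp}\rangle=\pm c^Tu$ and the semidefiniteness of $V^T\bar{A}V$ on $\mathcal{N}(c^T)$. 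In fact your derivation of (C1) from (B1) and (B2) is the correct reading of the paper's argument, which contains a small typo citing ``(B1) and (B3)'' there.
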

\begin{proof}
Since $B=\lambda A$, Condition (C2) holds for both $d_{+} = (1,\lambda)$ and $d_{-} = (-1,-\lambda)$. When $\{g=\beta\}$ separates $\{f=\alpha\}$ for some $\alpha, \beta \in \mathbb{R}$, the function $f$ cannot be affine, and hence $A \neq 0$. Also, according to Lemma \ref{lem:G=kF+H}, when $B=\lambda A$ and $\{g=\beta\}$ separates $\{f=\alpha\}$, the hyperplane $\{-\lambda ( f - \alpha) + (g - \beta) = 0\}$ also separates $\{f=\alpha\}$. Hence, Lemma \ref{lem:liner_sep_checkable} ensures the following three conditions hold:
\begin{enumerate}
\setlength{\itemsep}{0.1cm}
\item[(B1)] $A$ has exactly one negative (resp. positive) eigenvalue, $a \in \mathcal{R}(A)$
\item[(B2)] $c = -\lambda a + b \in \mathcal{R}(A)$, $c \neq 0$
\item[(B3)] $V^T A V \succeq 0$ (resp. $\preceq 0$)
\end{enumerate}
where $V \in \mathbb{R}^{n \times (n-1)}$ is a matrix basis of $\mathcal{N}(c^T)$. In the following, we will show that Conditions (B1)-(B3) imply that $d_{+} = (1,\lambda)$ or $d_{-} = (-1,-\lambda)$ satisfies Conditions (C1), (C3), and (C4).

Note that (C1) is independent to the choice of $d$, so we verify it first. Since $A$ is symmetric, we obtain
\begin{equation*}
\mathcal{R}(A) = \mathcal{R}(A^T) = \mathcal{N}(A)^{\perp}.
\end{equation*}
Also, when $B = \lambda A$, we have $\mathcal{N}(A) \cap \mathcal{N}(B) = \mathcal{N}(A),$ and hence
\begin{equation*} \label{eq:R(A)=N(A)perp}
\mathcal{R}(A) = \left( \mathcal{N}(A) \cap \mathcal{N}(B) \right)^{\perp}.
\end{equation*}
Thus, (B1) and (B3) imply that $a,~b \in \mathcal{R}(A) = \left( \mathcal{N}(A) \cap \mathcal{N}(B) \right)^{\perp}$, which means (C1) holds. For Conditions (C3) and (C4), we divide into two cases according Conditions (B1) and (B3):

When $A$ has exactly one negative eigenvalue and $V^T A V \succeq 0$, we are going to show that $d_{+} = (1,\lambda)$ satisfies (C3) and (C4). For (C3), since $A$ has one negative eigenvalue, there exists $u \in \mathbb{R}^n$ such that $u^T A u = -1$, and hence $u^T B u = -\lambda$ due to relation $B = \lambda A$. Therefore, $F_H(u) = -d_{+}$, which means $d_{+}$ satisfies (C3). To show Condition (C4) holds for $d_{+} = (1,\lambda)$, it suffices to show the following implications:
\begin{equation} \label{eq:pf-C4-equiv}
\left\langle F_L(u), (d_{+})_{\perp}\right\rangle = 0 ~ \implies ~ F_H(u) \neq -d_{+}.
\end{equation}
Observe that
$$
\left\langle F_L(u), (d_{+})_{\perp}\right\rangle = -\lambda a^T u + b^T u = c^T u.
$$
Now, for any $u \in \mathbb{R}^n$ such that $\left\langle F_L(u), (d_{+})_{\perp}\right\rangle = 0$, one has $u \in \mathcal{N}(c^T)$. Then there exists some $w \in \mathbb{R}^{n-1}$ such that $u = V w$. Since $V^T A V \succeq 0$, we have
$$u^T A u = w^T V^T A V w \geq 0,$$
which implies that $u^T A u \neq -1$, and hence $F_H(u) \neq -d_{+}$. Therefore, (\ref{eq:pf-C4-equiv}) holds, which means $d_{+}$ satisfies Condition (C4).

When $A$ has exactly one positive eigenvalue and $V^T A V \preceq 0$, similar argument will ensure that $d_{-} = (-1,-\lambda)$ satisfies Conditions (C3) and (C4).
\end{proof}
\section{Conclusion}
In this paper, we convert the problem about the convexity of $\mathbf{R}(f,g)$ in codomain into the separation property of level sets in the domain. The geometric feature for the non-convexity of the joint numerical range of two quadratic functions $f$ and $g$ is that there exists a pair of level sets $\{g=\beta\}$ and $\{f=\alpha\}$ such that $\{g=\beta\}$ separates $\{f=\alpha\}$ or $\{f=\alpha\}$ separates $\{g=\beta\}$. The result also suggests that S-lemma with equality by Xia et al. \cite[2016]{Xia-Wang-Sheu16} is also a direct consequence of the convexity of $\mathbf{R}(f,g)$. By Nguyen and Sheu \cite[2019]{Quang-Sheu19},
under Slater condition, the S-lemma with equality fails if and only if $\{g=0\}$ separates $\{f<0\}$. Hence, $f$ must have exactly one negative eigenvalue and
thus $\{g=0\}$ separates $\{f=-1\}.$ Finally, our approach also lends itself to a polynomial time procedure for checking the convexity of $\mathbf{R}(f,g),$ which we believe to facilitate more applications in the future.

\section*{Acknowledgement}
Huu-Quang, Nguyen's research work was supported by Taiwan MOST 108-2811-M-006-537 and Ruey-Lin Sheu's research work
was sponsored by Taiwan MOST 107-2115-M-006-011-MY2.


\begin{thebibliography}{99}
\bibitem{Brickmen61}
\newblock L. Brickmen, 
\newblock On the Field of Values of a Matrix, 
\newblock \emph{Proceedings of the American Mathematical Society}, \textbf{12} (1941), 61--66.




\bibitem{Dines41}
\newblock L. L. Dines,
\newblock On the mapping of quadratic forms,
\newblock \emph{Bulletin of the American Mathematical Society}, \textbf{47} (1941), 494--498.

\bibitem{Derinkuyu-Pinar06}
\newblock K. Derinkuyu and M. Ç. Pınar, 
\newblock On the S-procedure and some variants, 
\newblock \emph{Mathematical Methods of Operations Research}, \textbf{64} (2006), 55--77.


\bibitem{Bazan-Opazo16}
\newblock F. A. B. I. Á. N. Flores-Bazán and F. Opazo, 
\newblock Characterizing the convexity of joint-range for a pair of inhomogeneous quadratic functions and strong duality, 
\newblock \emph{Minimax Theory Appl}, \textbf{1} (2016), 257--290.

\bibitem{DWP-1}
\newblock S. C. Fang, D. Y. Gao, G. X. Lin, R. L. Sheu and W. X. Xing, 
\newblock Double well potential function and its optimization in the n-dimensional real space--Part I, 
\newblock \emph{J. Ind. Manag. Optim.}, \textbf{13} (2017), 1291--1305.



\bibitem{Quang-Sheu-Xia-Po4}
\newblock H. Q. Nguyen, R. L. Sheu and Y. Xia, 
\newblock Solving a new type of quadratic optimization problem having a joint numerical range constraint, 
\newblock 2020. Available from: \url{https://doi.org/10.13140/RG.2.2.23830.98887}.

\bibitem{Quang-Sheu19}
\newblock H. Q. Nguyen and R. L. Sheu, 
\newblock Geometric properties for level sets of quadratic functions, 
\newblock \emph{Journal of Global Optimization}, \textbf{73} (2019), 349--369.

\bibitem{separation}
\newblock H. Q. Nguyen and R. L. Sheu, 
\newblock Separation properties of quadratic functions, 
\newblock 2020. Available from: \url{https://doi.org/10.13140/RG.2.2.18518.88647}.


\bibitem{Polyak98}
\newblock B. T. Polyak, 
\newblock Convexity of quadratic transformations and its use in control and optimization, 
\newblock \emph{Journal of Optimization Theory and Applications}, \textbf{99} (1998), 553--583.

\bibitem{Polik-Terlaky07}
\newblock I. Pólik and T. Terlaky, 
\newblock A survey of the S-lemma, 
\newblock \emph{SIAM review}, \textbf{49} (2007), 371--418.


\bibitem{Ramana-Goldman95}
\newblock M. Ramana and A. J. Goldman, 
\newblock Quadratic maps with convex images, 
\newblock Submitted to Math of OR.


\bibitem{Tuy-Tuan13}
\newblock H. Tuy and H. D. Tuan, 
\newblock Generalized S-lemma and strong duality in nonconvex quadratic programming, 
\newblock \emph{Journal of Global Optimization}, \textbf{56} (2013), 1045--1072.


\bibitem{Xia-Wang-Sheu16}
\newblock Y. Xia, S. Wang and R. L. Sheu, 
\newblock S-lemma with equality and its applications, 
\newblock \emph{Mathematical Programming}, \textbf{156} (2016), 513--547.

\bibitem{DWP-2}
\newblock Y. Xia, R. L. Sheu, S. C. Fang and W. Xing, 
\newblock Double well potential function and its optimization in the n-dimensional real space--Part II, 
\newblock \emph{J. Ind. Manag. Optim.}, \textbf{13} (2017), 1307--1328.


\bibitem{Yakubovich71}
\newblock V. A. Yakubovich, 
\newblock S-procedure in nolinear control theory, 
\newblock \emph{Vestnik Leninggradskogo Universiteta, Ser. Matematika}, (1971), 62--77.

\bibitem{Ye-Zhang16}
\newblock Y. Ye and S. Zhang, 
\newblock New results on quadratic minimization, 
\newblock \emph{SIAM Journal on Optimization}, \textbf{14} (2003), 245--267.



































\end{thebibliography}
\end{document}